\pdfoutput=1
\documentclass[a4paper,11pt]{article}
\usepackage{hyperref}
\hypersetup{
    colorlinks=true,
    linktoc=all,
    linkcolor=red,     %choose some color if you want links to stand out
    citecolor=blue,     %choose some color if you want links to stand out
}
\usepackage{graphicx}
\usepackage{amsmath,amssymb,amsfonts,amsthm,graphics,enumitem}
\usepackage{latexsym, bm}
\usepackage{multicol}
\usepackage{comment}
\usepackage{indentfirst}

\usepackage{setspace}
\newtheorem{theorem}{Theorem}[section]
\newtheorem{lemma}{Lemma}[section]
\newtheorem{claim}{Claim}

\newtheorem{proposition}{Proposition}[section]
\newtheorem{definition}{Definition}[section]

\newtheorem{conjecture}{Conjecture}[section]

\newcommand{\ignore}[1]{}

\begin{document}
\begin{spacing}{1}
\date{}
\title{Connectivity keeping pendant extensions of paths in $k$-connected graphs and triangle-free graphs\thanks{The work is supported by NSFC(11871015) and CGLSTDFFJ(2023L3003). E-mail: menghanma664@163.com (M. Ma), qliu@fzu.edu.cn (Q. Liu), 1519036064@qq.com (L. Zhang), yhong@fzu.edu.cn (Y. Hong).}}

\author{Menghan Ma$^a$, Qinghai Liu$^{a,c}$\footnote{Corresponding author.}, Liping Zhang$^{a}$, Yanmei Hong$^{b,c}$\\[1mm]
{\small \it
$^a$Center for Discrete Mathematics, Fuzhou University, Fuzhou, Fujian, 350108 China}\\
%{\small \it
%$^b$Fujian Science \& Technology Innovation Laboratory for Optoelectronic Information of China, Fuzhou, Fujian, 350108 China}\\
{\small \it
$^b$School of Mathematics and Statistics, Fuzhou University, Fuzhou, Fujian, 350108 China}\\
{\small \it
$^c$Key Laboratory for Operations Research and Cybernetics of Fujian Universities}
}
\begin{comment}
\title{Connectivity keeping pendant extensions of paths in $k$-connected graphs and triangle-free graphs}
\date{}

\author{
xx\footnote{Center for Discrete Mathematics, Fuzhou University,
Fuzhou, 350108, P.~R.~China. Email: {\tt xx}. } \;\;   \;\; xx\footnote{Center for Discrete Mathematics, Fuzhou University,
Fuzhou, 350108, P.~R.~China. Email: {\tt xx}. } \;\; xx\footnote{Center for Discrete Mathematics, Fuzhou University,
Fuzhou, 350108, P.~R.~China. Email: {\tt xx}. }
}
\end{comment}
\maketitle
\begin{abstract}
Motivated by Mader's conjecture on connectivity keeping trees, we study trees obtained from paths by adding one pendant vertex, as well as related problems in triangle-free graphs.

For an integer $m$ and $1\leq i\leq m-1$, let $P_m^+(i)$ denote the tree obtained from a path of order $m-1$ by adding one pendant vertex adjacent
to its $i$th vertex. We prove that, for positive integers $k,m,1\leq i\leq m-1$, every $k$-connected graph $G$ with $\delta(G)\geq \lfloor \frac{3k}{2}\rfloor+m-1$ contains a subgraph $T\cong P_m^+(i)$ such that $\kappa(G-V(T))\geq k$. This confirms Mader's conjecture for all pendant extensions of paths.

For highly connected triangle-free graphs, a connectivity keeping result for paths was obtained in [J. Combin. Theory Ser. B, 174 (2025), 190-206]. Let $(X,Y)$ be the bipartition of $P_m^+(i)$. We further prove that every $k$-connected triangle-free graph $G$ with $\delta(G)\geq k+\max\{|X|,|Y|\}+[P_m^+(i)\text{ is bad}]$ contains a subgraph $T\cong P_m^+(i)$ such that $\kappa(G-V(T))\geq k$, where we use Iverson's convention for $[P_m^+(i)\text{ is bad}]$. This extends the corresponding result for paths to pendant extensions of paths.
\medskip

{\em Keywords:} \ Connectivity, $k$-connected, Mader's conjecture

\end{abstract}

\section{Introduction}

In this paper, all graphs are considered to be finite, undirected, and simple. We refer to the book \cite{Bondy} for any undefined notation and terminology in the following.

For a graph $G$, we denote its {\it vertex set}, {\it edge set}, {\it connectivity}, and {\it minimum degree} as $V(G)$, $E(G)$, $\kappa(G)$, and $\delta(G)$, respectively. For a vertex set $U\subseteq V(G)$, $G[U]$ denotes the subgraph induced by $U$ and $G-U=G[V(G)\setminus U]$.
The {\it connectivity} $\kappa(G)$ of $G$ is the cardinality of the minimum vertex subset $S\subseteq V(G)$ such that $G-S$ is disconnected or trivial. A graph $G$ is {\it $k$-connected} if $\kappa(G)\geq k$.

In 1972, Chartrand, Kaigars and Lick \cite{Chartrand} first proved that every $k$-connected graph $G$ with  $\delta(G)\ge\left\lfloor\frac{3k}{2}\right\rfloor$ contains a vertex $u$ such that $\kappa(G-u)\geq k$. In 2008, Fujita and Kawarabayashi \cite{Fujita} proved that every $k$-connected graph $G$ with $\delta(G)\ge\left\lfloor\frac{3k}{2}\right\rfloor+2$ contains an edge $uv$ such that $\kappa(G-\{u,v\})\geq k$, and they proposed a conjecture about a redundant subgraph in a $k$-connected graph.

\begin{conjecture}[\upshape Fujita and Kawarabayashi~\cite{Fujita}]\label{Fujita} For all positive integers $k,m$, there is a (least) non-negative integer $f_k(m)$ such that every $k$-connected graph $G$ with $\delta(G)\ge\left\lfloor\frac{3k}{2}\right\rfloor+f_k(m)-1$ contains a connected subgraph $W$ of order $m$ such that $\kappa(G-V(W))\geq k$.
\end{conjecture}

In 2010, Mader \cite{Mader1} proved the validity of Conjecture \ref{Fujita}, explicitly established that $f_k(m)=m$, and showed that the subgraph $W$ can be chosen as a path.

\begin{theorem}[\upshape Mader~\cite{Mader1}]\label{mader1}
For all positive integers $k,m$, every $k$-connected graph $G$ with $\delta(G)\ge\left\lfloor\frac{3k}{2}\right\rfloor+m-1$ contains a path $P$ of order $m$ such that $\kappa(G-V(P))\geq k$.
\end{theorem}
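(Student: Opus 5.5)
We plan to prove Theorem~\ref{mader1} by choosing an extremal configuration and deriving a contradiction from a minimal fragment. Suppose $G$ is a counterexample. By the theorem of Chartrand, Kaigars and Lick, $G$ contains a vertex $u$ with $\kappa(G-u)\ge k$. So the set of paths $P$ with $\kappa(G-V(P))\ge k$ is nonempty. Choose such a path $P=x_1x_2\cdots x_\ell$ with $\ell<m$ maximal. Among all such paths, choose one minimizing an auxiliary potential, for instance the order of a smallest fragment of $G-V(P)$ relevant to the extension. Every vertex of $G-V(P)$ has degree at least $\lfloor 3k/2\rfloor+m-1-\ell\ge\lfloor 3k/2\rfloor$ in $G-V(P)$. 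Hence we may hope to extend $P$ at an endpoint, say $x_\ell$, by a neighbour $y\notin V(P)$. Such a $y$ exists because $d(x_\ell)\ge \lfloor 3k/2\rfloor+m-1>\ell-1$.

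The key step is to analyse why no neighbour $y$ of $x_\ell$ in $H:=G-V(P)$ satisfies $\kappa(H-y)\ge k$. For each such $y$ there is a $(k-1)$-cut $S_y\ni y$ of $H-$ (more precisely, a $k$-cut $S$ of $H$ containing $y$). We take a fragment $F$, a component union on one side of such a cut, that is minimal over all such cuts. Standard fragment arithmetic then applies, in the style of Mader's and Chartrand–Kaigars–Lick's arguments. A vertex $v\in F$ has $d_H(v)\ge\lfloor3k/2\rfloor$, and all its neighbours lie in $F\cup S$. Therefore $|F|\ge \lfloor k/2\rfloor+1$. Since the complementary fragment is at least as large, the intersection and crossing properties of minimal fragments (Mader's "ends" technique) force $F$ to contain no neighbour of $x_\ell$ that is itself contained in a $k$-cut whose fragment is strictly smaller. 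We then exploit the freedom in the path. We may reverse $P$, or replace $x_\ell$ by a neighbour inside $F$, using the $k$-connectivity of $G$ to find at least $k$ edges from $V(P)$ into any fragment. This produces a new admissible path with a strictly smaller minimal fragment, contradicting the choice.

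The main obstacle is exactly this exchange step. Removing a vertex of $F$ might create new small cuts that do not interact with $S$ as expected. To handle it, we would first try the strategy of working with ends: atoms of the family of $k$-cuts of $H$ that contain a neighbour of an endpoint of $P$. We would then show that the degree condition $\lfloor 3k/2\rfloor$ prevents two such ends from overlapping in too many cut vertices. The classical bound $|S\cap T|$ versus $|F\cap T|$ and the submodular inequality $|N(F\cap F')|+|N(F\cup F')|\le |S|+|S'|$ give that $F\cap F'$ would be a smaller fragment unless it is empty. If the minimal-fragment induction fails, the fallback is Mader's original approach. In that approach one constructs a maximal sequence of paths and uses that a $k$-connected graph with $\delta\ge \lfloor 3k/2\rfloor$ has many "critical-free" vertices, specifically at least $2$ in every end. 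That supplies the extension vertex inside the end adjacent to the path's endpoint.
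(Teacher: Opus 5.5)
The paper does not prove this theorem; it quotes it from Mader, together with its end version, Lemma~\ref{mpath}. Judged on its own, your proposal has a genuine gap at exactly the step you call the key one.

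\textbf{The exchange step.} The step that is supposed to contradict the choice of $P$ is never specified. The potential is only described as ``for instance'' a smallest relevant fragment. The justification you give is false: $k$-connectivity of $G$ does not give $k$ edges from $V(P)$ into a fragment of $H=G-V(P)$. If $\kappa(G)=k$, a fragment of $H$ may have no neighbour on $P$ at all. If $\kappa(G)=k+t$, you only learn that at least $t$ vertices of $P$ see it, and nothing forces these to be endpoints.

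Replacing $x_\ell$ by some $z\in F$, or sliding to $x_2\cdots x_\ell z$, does not obviously give an admissible path either. The new remainder is $H-z$ with one vertex of $P$ returned. Nothing you have shown makes $H-z$ $k$-connected; indeed, if $z$ is a neighbour of an endpoint, it is critical in $H$ by your own assumption. Nothing shows the returned vertex repairs the small cuts of $H-z$ either. Reversing $P$ just moves the problem to $x_1$.

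\textbf{The fallback.} It has the same defect. ``At least two non-critical vertices in every end'' is too weak, because you need a non-critical vertex \emph{adjacent to the current endpoint}. Moreover, ``the end adjacent to the path's endpoint'' presupposes an invariant that a path chosen only for maximal order does not have: nothing in your setup prevents all $H$-neighbours of $x_1$ and $x_\ell$ from lying outside every end of $H$.

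\textbf{What is missing.} Two concrete ingredients close the gap.

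(a) The case $m=1$ of Lemma~\ref{mpath}: if $\kappa(H)=k$ and $\delta(H)\ge\lfloor 3k/2\rfloor$, then $\kappa(H-v)\ge k$ for \emph{every} vertex $v$ of every end of $H$. This follows from a nine-cell crossing argument like the one in Lemma~\ref{lem:structure-of-k-cut}. It uses only the minimality of the end and the fact that every semifragment with respect to a $k$-cut has at least $\lfloor k/2\rfloor+1$ vertices.

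(b) Instead of fixing a maximal path, grow the path while keeping the invariant that its last vertex has a neighbour $y$ in an end $F$ of the current remainder $R$.
\begin{itemize}
\item Since $\delta(R)\ge\lfloor 3k/2\rfloor$ throughout, (a) lets you delete $y$.
\item Lemma~\ref{subend} then restores the invariant with an end $F'\subseteq F$ of $R-y$ meeting $N_G(y)$.
\item The bound $|F|\ge\delta(R)-k+1$ gives $|F|\ge 2$ whenever the extended path still has fewer than $m$ vertices, so $F-y\neq\emptyset$.
\end{itemize}
To initialise the invariant, there are two cases.
\begin{itemize}
\item If $\kappa(G)=k$, start inside an end of $G$.
\item If $\kappa(G)>k$, extend greedily while the remainder stays $(k+1)$-connected. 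At the first moment $G-\{x_1,\dots,x_{j+1}\}$ has connectivity exactly $k$, the vertex $x_{j+1}$ must have a neighbour in each of its ends, because $G-\{x_1,\dots,x_j\}$ was $(k+1)$-connected. Compare the argument in Claim~\ref{P}.
\end{itemize}
This is also exactly why the sliding step in Case 3 of Subsection~\ref{genfull} works. There the new vertex lies in an end, so it is non-critical. The dropped vertex has no neighbour in that end, so Lemma~\ref{lem:add-vertex-outside-end} applies. With (a) and (b), no potential or exchange argument is needed.
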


Mader further conjectured that the path in the theorem above can be replaced by any tree of the same order.

\begin{conjecture}[\upshape Mader~\cite{Mader1}]\label{Mad}
For any tree $T$ of order $m$ and any integer $k$, every $k$-connected graph $G$ with $\delta(G)\ge\left\lfloor\frac{3k}{2}\right\rfloor+m-1$ contains a subtree $T'\cong T$ such that $\kappa(G-V(T'))\geq k$.
\end{conjecture}

As early as 2009, the result published by Diwan and Tholiya \cite{Diwan} established the validity of Conjecture \ref{Mad} for the case when $k=1$. For the case when $k=2$, there have been some studies on special trees \cite{Hasunuma2, HongLLY, Lu, Luo, Tian1, Tian2}. In \cite{HongLiu}, Hong and Liu proved that the conjecture holds for any tree when $k\leq 3$. For general $k$, Mader \cite{Mader2} proved the conjecture holds if $\delta(G)\geq 2(k+m-1)^2+m-1$. Recently, Liu, Ying and Hong \cite{LYH} optimized the minimum degree condition to be linear and proved that the conjecture holds if $\delta(G)\geq 3k+4m-6$. Nevertheless, the sharp bound predicted by Mader's conjecture remains open for many natural classes of non-path trees when $k\geq 4$. For more studies on Mader's conjecture, one can refer to the survey \cite{Tian3}.

In this paper, we study trees obtained from paths by adding a single pendant vertex, a natural and fundamental family of non-path trees. For integers $m$ and $i$ with $1\leq i\leq m-1$, let $P_m^+(i)$ denote the tree obtained from a path $v_1v_2\cdots v_{m-1}$ by adding a new vertex $u$ adjacent to $v_i$. Thus $P_m^+(i)$ has order $m$ and may be viewed as a pendant extension of a path. Observe that $P_m^+(1)$ and $P_m^+(m-1)$ are both paths of order $m$, while $P_4^+(2)\cong K_{1,3}$.

Our first main result confirms Mader's conjecture for all pendant extensions of paths.
\begin{comment}
\begin{theorem}\label{K13}
Let $k$ be a positive integer. Every $k$-connected graph $G$ with $\delta(G)\ge\left\lfloor\frac{3k}{2}\right\rfloor+3$ contains a subgraph $T\cong K_{1,3}$ such that $\kappa(G-V(T))\geq k$. Moreover, if $\kappa(G)=k$, then $T$ can be chosen so that $T\subseteq F$, where $F$ is an arbitrary end of $G$.
\end{theorem}
\end{comment}

\begin{theorem}\label{1tm-t-2}
Let $k,m,i$ be positive integers with $1\leq i\leq m-1$. Every $k$-connected graph $G$ with $\delta(G)\ge\left\lfloor\frac{3k}{2}\right\rfloor+m-1$ contains a subgraph $T\cong P_m^+(i)$ such that $\kappa(G-V(T))\geq k$. Moreover, if $\kappa(G)=k$, then $T$ can be chosen so that $T\subseteq F$, where $F$ is an arbitrary end of $G$.
\end{theorem}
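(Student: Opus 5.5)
We follow the standard strategy behind Theorem~\ref{mader1} and the results of Fujita--Kawarabayashi and Hong--Liu. We argue by induction on $|V(G)|$ with the strengthened ``moreover'' clause built into the hypothesis. Recall that when $\kappa(G)=k$, a \emph{fragment} is a component union $F$ of $G-S$ for some minimum cut $S$ (so $|S|=k$ and $V(G)\setminus(F\cup S)\neq\emptyset$), and an \emph{end} is a fragment containing no other fragment.

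The case $\kappa(G)>k$ reduces to the case $\kappa(G)=k$ as follows. Either the statement holds directly, because a path-plus-pendant in $G$ whose removal leaves connectivity $\ge k$ is easily found, or we delete an edge or vertex to reach a graph with connectivity exactly $k$ that still satisfies the degree bound. More cleanly: if $\kappa(G)\ge k+1$, we apply the $k+1$ case of the argument below to an end of $G$, noting that $\lfloor 3(k+1)/2\rfloor\ge\lfloor 3k/2\rfloor$ only helps. So the core case is $\kappa(G)=k$ with an end $F$ and its cut $S$.

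\textbf{Step 1 (structure of an end).} Using the standard Mader-type counting, an end $F$ with $\delta(G)\ge\lfloor 3k/2\rfloor+m-1$ satisfies $|F|\ge \lfloor k/2\rfloor+m$. Moreover, every vertex of $F$ has at least $\lfloor k/2\rfloor+m-1$ neighbours inside $F$, which holds since it has at most $k$ neighbours in $S$. Hence $\delta(G[F])\ge \lfloor k/2\rfloor+m-1\ge m-1$.

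\textbf{Step 2 (choosing $T$ inside $F$).} Since $\delta(G[F])\ge m-1$, $G[F]$ contains every tree of order $m$, in particular $P_m^+(i)$. Among all copies we choose $T$ extremally, for example so that the component structure of $G[F]-V(T)$ is lexicographically maximal, or so that $T$ ends in a longest path of $G[F]$ with the pendant attached greedily. The key property we want is that every vertex outside $T$ in $F$ keeps many neighbours outside $T$, so that no small separator of $G-V(T)$ can cut off a piece of $F$.

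\textbf{Step 3 (connectivity of $G-V(T)$).} Suppose $G-V(T)$ has a separator $X$ with $|X|<k$. Then $X\cup V(T)$ separates $G$. Let $A$ and $B$ be the sides. Because $F$ is an end and $S$ is a minimum cut, the standard crossing-fragment argument (submodularity $|X_1\cap S|+\dots$) shows that one side, say $A$, lies entirely in $F$ and is small: $|A|\le\lfloor k/2\rfloor$-ish, since otherwise $A\cap F$ would be a smaller fragment, contradicting that $F$ is an end. A vertex $a\in A$ has degree at least $\lfloor 3k/2\rfloor+m-1$. Its neighbours lie in $A\cup X\cup V(T)$, which forces $|A|+|X|+|N(a)\cap V(T)|\ge \lfloor 3k/2\rfloor+m$. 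With $|X|\le k-1$ this gives $|A|\ge\lfloor k/2\rfloor+2+(m-1-|N(a)\cap V(T)|)$. Combining the degree count with the upper bound on $|A|$ yields a vertex $a$ adjacent to almost all of $T$.

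\textbf{Main obstacle.} The difficult step is turning this ``$a$ is adjacent to nearly all of $T$'' conclusion into a contradiction with the extremal choice of $T$. For paths, one reroutes through $a$ to produce a path that contradicts maximality. For $P_m^+(i)$ we must reroute while preserving the position $i$ of the branch vertex. The first attempt will be as follows. If $a$ is adjacent to the leaf $u$ or to $v_{m-1}$ and to a suitable neighbour, replace that leaf by $a$ to get a new copy $T'$ whose deletion strictly improves the extremal parameter, since $a$ is absorbed and the freed vertex has many neighbours in the large side. Otherwise, $a$ sees two consecutive path vertices $v_j,v_{j+1}$ and we splice $a$ in, discarding an end vertex. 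We handle separately the small case $i\in\{1,m-1\}$, which is Theorem~\ref{mader1}, and the case $m\le 3$, which follows from Chartrand--Kaigars--Lick and Fujita--Kawarabayashi.
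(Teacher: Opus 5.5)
Your plan has a genuine gap. The part that carries the whole theorem is never carried out: producing a copy of $P_m^+(i)$ in the end whose deletion keeps connectivity at least $k$.

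\textbf{Step 3 does not hold up.} It rests on the claim that one side $A$ of the separator $X\cup V(T)$ lies in $F$ and has order about $\lfloor k/2\rfloor$, ``since otherwise $A\cap F$ would be a smaller fragment''. This does not follow. The set $X\cup V(T)$ can have up to $k-1+m$ vertices, so $A\cap F$ is not a fragment of $G$. Hence neither the minimality of $S$ nor the fact that $F$ is an end bounds $|A|$. Your own degree count only gives the lower bound $|A|\ge\lfloor k/2\rfloor+1+(m-|N_G(a)\cap V(T)|)$. For a badly placed $T$ both sides can be large: for instance, $T$ may cover a separator $Y$ of $G[F]$ one of whose sides has only $k-1$ neighbours in $S$.

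\textbf{The extremal choice and the rerouting are not done.} Whether an extremal choice of $T$ excludes this depends on a parameter you never fix. The rerouting you yourself call the main obstacle is only a ``first attempt''. You do not show that the rerouted copy improves any parameter, or that its deletion behaves better. The announced induction on $|V(G)|$ is never used.

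\textbf{The reduction of $\kappa(G)>k$ goes the wrong way.} Invoking the statement with $k+1$ requires $\delta(G)\ge\lfloor 3(k+1)/2\rfloor+m-1$, which is \emph{more} than you are given.

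\textbf{What the paper does instead.} It never analyses separators of $G-V(T)$ for a fixed $T$.
\begin{enumerate}
\item It takes a counterexample tree of minimum order $s$ and deletes the last vertex of a longest leg $L_1$. This gives $T$ with $\kappa(G-V(T))\ge k$. It shows $\kappa(G-V(T))=k$ and picks an end $F$ of $G-V(T)$ adjacent to $T$; when $\kappa(G)>k$, every end qualifies.
\item Vertices of $T$ with no neighbour in $F$ can be put back without changing the connectivity or the end $F$ (Lemma~\ref{lem:add-vertex-outside-end}).
\item Mader's Lemma~\ref{mpath} gives a path of any prescribed order starting at any prescribed vertex of an end. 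This lets the paper regrow a truncated leg from a neighbour in $F$ (Claim~\ref{gl_1}).
\item The case where only $L_2'$ sees $F$ is closed by choosing a path of order $l_2+2$ with exactly one endvertex seeing an end $F_P$, with $|F_P|$ minimum.
\item The base case $K_{1,3}$ has no leg of length at least two and needs its own argument. The paper takes a triple $(u_1,u_2u_3u_4,F)$ with $|F|$ minimum and crosses $S$ with a $k$-cut of $H$ through a neighbour of $u_3$ in $F$. This shows $H-S'$ has exactly two components, one a $K_{\lfloor k/2\rfloor}$ complete to $\{u_1,u_2,u_3,u_4\}$. Since $u_1$ misses the other component, $d_G(u_1)\le\lfloor 3k/2\rfloor+2$, a contradiction.
\end{enumerate}
These ingredients, especially using Lemma~\ref{mpath} to extend partial copies inside an end, are what your proposal is missing.
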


We also consider the corresponding problem for triangle-free graphs. Chu, Fujita, Park and Ryu \cite{CFPR} proved that for any tree $T$ of order $m$, every $k$-connected triangle-free graph $G$ with $\delta(G)\geq 2k+3m-4$ contains a subtree $T'\cong T$ such that $\kappa(G-V(T'))\geq k$. For paths, Fujita \cite{Ftriangle} established the following two results.

\begin{theorem}[\upshape Fujita~\cite{Ftriangle}]\label{F1}
Let $k,m$ be positive integers. If $G$ is a $k$-connected triangle-free graph with $\delta(G)\geq k+\frac{m+1}{2}$, then $G$ contains a path $P$ on
$m$ vertices such that $G-V(P)$ is $k$-connected. Moreover, $P$ has the following additional properties according to the value of $\kappa(G)$.
\begin{enumerate}
    \item If $\kappa(G)=k$, then for every end $F$ of $G$ and every
vertex $v\in V(F)$, there exists such a path $P$ with
$V(P)\subseteq V(F)$ and with $v$ as an endvertex.

    \item If $\kappa(G)\geq k+1$, then we can choose any vertex of
    $G$ as an end vertex of $P$.
\end{enumerate}
\end{theorem}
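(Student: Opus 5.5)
The statement just before this point is Theorem~\ref{F1}, Fujita's result on paths in triangle-free graphs. I would prove it by induction on $m$, with a strengthened hypothesis carrying the end-vertex prescription, and use the standard end/fragment machinery for $k$-connected graphs.

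\textbf{Base case and inductive step.} For $m=1$ the claim is a vertex-deletion statement. Triangle-freeness forces fragments to be large. If $F$ is a fragment with separator $S$, $|S|=k$, then a vertex $x\in F$ has at least $\delta-k$ neighbours inside $F$. Those neighbours are pairwise nonadjacent, and each of them again has at least $\delta-k$ neighbours in $F\cup S$. Hence $|F|\ge 2(\delta-k)$.

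Given a path $P'=v_1\cdots v_{m-1}$ with end $v=v_1$ and $G'=G-V(P')$ $k$-connected, I extend at the other end $v_{m-1}$. I would like a neighbour $w$ of $v_{m-1}$ outside $P'$ such that $G'-w$ is $k$-connected. If $\kappa(G')\ge k+1$, any $w$ works. Otherwise I take an end $F'$ of $G'$.

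\textbf{Choosing the extending vertex.} Here I need $v_{m-1}$ to have a neighbour $w\in F'$ that is not contained in any minimum separator of $G'$ lying inside $F'$. Two tools do this:
\begin{itemize}
\item A Mader-type lemma: in an end, the vertices lying in $k$-separators are few relative to the minimum degree.
\item Triangle-freeness, which bounds how many neighbours of $v_{m-1}$ can lie on the path. Since the neighbourhood of $v_{m-1}$ is independent, at most about $\lceil (m-1)/2\rceil$ of its neighbours are path vertices, essentially every other one.
\end{itemize}
Consequently $v_{m-1}$ has at least $\delta-\frac{m}{2}\ge k+\frac12$ neighbours off the path. That is roughly $k+1$, so it has a neighbour outside any $k$-set. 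This parity count is exactly where the bound $\delta\ge k+\frac{m+1}{2}$ enters.

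\textbf{Controlling the ends (main obstacle).} The real difficulty is to make $v_{m-1}$ have a neighbour inside the chosen end $F'$ of $G'$, not merely outside a separator. For this I would strengthen the induction hypothesis, keeping the whole path inside one end $F$ of $G$. I would also choose the path, e.g. maximise or minimise some end size, so that the current endpoint is adjacent into a minimal end of $G-V(P')$ that is nested in $F$.

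I would first argue that if an end $F'$ of $G'$ misses $N(v_{m-1})$, then $F'$ together with its separator and the path gives a separator of $G$ whose side is strictly smaller than $F$. This contradicts minimality of $F$, provided $|F'|$ is large, which the bound $|F|\ge 2(\delta-k)$ supplies.

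Part 2 of the statement follows from Part 1 applied to $G$ as a $(k+1)$-connected graph, or directly, since deleting one prescribed vertex keeps $\kappa(G)\ge k$.
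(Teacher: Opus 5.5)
The paper only cites this theorem from Fujita and does not prove it, so I judge your outline on its own. It has a genuine gap exactly at the step you call the main obstacle.

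\textbf{The inductive step.} Your induction hypothesis gives you $P'=v_1\cdots v_{m-1}$ with $G'=G-V(P')$ $k$-connected. It says nothing about \emph{which} vertex of $P'$ sees the ends of $G'$. Lemma~\ref{subend} only gives an end of $G'$ inside $F$ adjacent to \emph{some} vertex of $P'$, possibly $v_1$ or an interior vertex.

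Your proposed contradiction fails. If an end $F'$ of $G'$ misses $N_G(v_{m-1})$, then $N_G(F')$ is $N_{G'}(F')$ plus whatever $v_1,\dots,v_{m-2}$ send into $F'$, a set of size up to $k+m-2$. Endness of $F$ concerns only $k$-separators, so you get a contradiction only if no vertex of $P'$ sees $F'$. You have not shown this, and the unspecified extremal choice does not supply it.

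The missing idea is to carry an invariant in the induction: there is an end $F_j\subseteq F$ of $G_j=G-\{v_1,\dots,v_j\}$ with $N_G(v_j)\cap V(F_j)\neq\emptyset$. It is maintained without any extremal argument:
\begin{enumerate}
\item pick $v_{j+1}\in N_G(v_j)\cap V(F_j)$;
\item the case $m=1$ applied to $G_j$ gives $\kappa(G_{j+1})\ge k$;
\item Lemma~\ref{subend} applied in $G_j$ with $W=\{v_{j+1}\}$ gives an end $F_{j+1}\subseteq F_j$ of $G_{j+1}$ seen by $v_{j+1}$.
\end{enumerate}
This is precisely how the paper uses the theorem, e.g.\ in the even case of Claim~\ref{degreel_1}.

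The degree hypothesis is needed for \emph{every} vertex of $G_j$, not just $v_{m-1}$. Neighbourhoods are independent, so each vertex has at most $\lceil j/2\rceil$ neighbours on $v_1\cdots v_j$. Hence $\delta(G_j)\ge k+\frac{m+1}{2}-\lceil\frac{m-1}{2}\rceil\ge k+\frac12$, i.e.\ $\delta(G_j)\ge k+1$ for all $j\le m-1$, which is what the case $m=1$ needs.

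\textbf{Two further steps are only asserted.} First, the deletion fact you need is not that \emph{few} vertices of an end lie in $k$-separators, but that none do. Precisely: if $H$ is triangle-free with $\kappa(H)=k$, $\delta(H)\ge k+1$, and $F$ is an end of $H$, then $\kappa(H-v)\ge k$ for every $v\in V(F)$. Having a neighbour outside any $k$-set does not give this. It needs a crossing argument:
\begin{enumerate}
\item If $v$ lies in a $k$-cut $S'$, split $H$ into the nine regions as in Lemma~\ref{lem:structure-of-k-cut}.
\item Endness of $F$ forces $|N_H(F\cap F')|\ge k+1$ when $F\cap F'\neq\emptyset$, hence $\bar F\cap\bar{F'}=\emptyset$; symmetrically, $F\cap\bar{F'}\neq\emptyset$ gives $\bar F\cap F'=\emptyset$.
\item In every remaining configuration, one of $F,\bar F,F',\bar{F'}$ lies inside a set of at most $k$ vertices.
\item This is impossible: any nonempty $A$ with $|N_H(A)|\le k$ contains an edge $xy$ whose neighbourhoods are disjoint and lie in $A\cup N_H(A)$, so $|A|\ge k+2$.
\end{enumerate}

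Second, Part~2 does not follow from Part~1 with $k+1$ in place of $k$, since that would require $\delta\ge k+1+\frac{m+1}{2}$. Instead, grow the path greedily from the prescribed vertex while the remainder is $(k+1)$-connected. At the first $j$ with $\kappa(G_j)=k$, every end of $G_j$ is adjacent to $v_j$, since otherwise its separator would be a $k$-cut of $G_{j-1}$. From there the nested-end step above takes over.
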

\begin{theorem}[\upshape Fujita~\cite{Ftriangle}]\label{F2}
For an integer $k\geq 3$, if $G$ is a $k$-connected triangle-free graph with $\delta(G)\geq k+1$, then $G$ contains an edge $e=xy$ such that $\kappa(G-\{x,y\})\geq k$.
\end{theorem}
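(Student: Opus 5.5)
The plan is to argue by contradiction using minimal fragments (atoms), with triangle-freeness supplying an edge inside the minimal fragment. If $\kappa(G)\geq k+2$ then every edge works, so assume $\kappa(G)\in\{k,k+1\}$.

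\textbf{Setting up the minimal fragment.} Suppose no edge $xy$ has $\kappa(G-\{x,y\})\geq k$. Then for every edge $xy$ there is a set $S$ with $|S|\leq k-1$ separating $G-\{x,y\}$. Thus $T=S\cup\{x,y\}$ is a separator of $G$ with $k\leq |T|\leq k+1$ that contains the edge $xy$. Call such a $T$ an \emph{edge-separator}, and call a union $F$ of some but not all components of $G-T$ a \emph{fragment} of $T$. Among all edge-separators and their fragments, choose a pair $(T,F)$ with $|F|$ minimum, and write $\overline F=V(G)\setminus(T\cup F)$.

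\textbf{An edge inside $F$.} Take any $z\in F$. Because $G$ is triangle-free and $x,y\in T$ are adjacent, $z$ is adjacent to at most one of $x,y$. Hence $z$ has at most $|T|-1\leq k$ neighbours in $T$. Since $\deg z\geq k+1$ and $z$ has no neighbours in $\overline F$, $z$ has a neighbour $z'\in F$. The same count applied to $z'$ gives $|F|\geq 2$, and a slightly refined version (using $\deg\geq k+1$ and the absence of triangles in $N(z)\cup N(z')$) should give $|F|\geq 3$, which will be needed below. Now apply the hypothesis to the edge $zz'$: this yields an edge-separator $T'\ni z,z'$ with a fragment $F'$ and complement $\overline{F'}$.

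\textbf{Crossing argument.} Since $T'$ meets $F$, I will analyse the standard nine-cell partition given by $(F,T,\overline F)$ and $(F',T',\overline{F'})$. The goal is to show that one of the corner sets, $F\cap F'$ or $F\cap\overline{F'}$, is nonempty and is a fragment of an edge-separator strictly smaller than $F$; this contradicts the minimality of $|F|$.
\begin{itemize}
\item The neighbourhood of the corner $F\cap F'$ lies in $(F\cap T')\cup(T\cap T')\cup(T\cap F')$.
\item Counting $|F\cap T'|+|T\cap T'|+|T\cap F'|$ against $|T|+|T'|\leq 2k+2$ and the opposite corner, the usual submodularity gives that one of the two corner separators has size at most $k+1$.
\item When that corner separator has size exactly $k$, I instead compare with $\kappa(G)\geq k$.
\end{itemize}
The corner separator must also contain an edge. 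This holds when it contains $z,z'$ (both lie in $F\cap T'$), or when it contains $x,y$ (both in $T$). The case distinction is according to whether $x,y$ lie in $T'$, $F'$ or $\overline{F'}$. Triangle-freeness forbids $x$ or $y$ from being adjacent to both $z$ and $z'$, which constrains where $x,y$ can sit. I also need to handle empty opposite corners: if $\overline F\cap\overline{F'}=\emptyset$, use $|\overline F|\geq$ something derived from the degree of a vertex in $\overline F$, computed in the same way as for $F$.

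\textbf{Main obstacle.} The hard step is the crossing count. The separators have size up to $k+1$, exceeding $\kappa(G)=k$ by one, so submodularity leaves a slack of about $2$ rather than $0$. I expect to absorb this slack using three ingredients: the degree bound $k+1$, triangle-freeness (each vertex of $F$ misses one of $x,y$), and $k\geq 3$, which rules out small degenerate configurations where $|T\cap T'|$ is large. My first attempt will split on $|T\cap T'|$ and on the location of $x$ and $y$ relative to $T'$.
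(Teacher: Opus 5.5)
\textbf{There is a genuine gap: the crossing analysis is not carried out, and the tools you name do not close it.}

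Your set-up matches the skeleton of the paper's argument for the $k=2$ analogue (Proposition~\ref{propk=2}). The paper does not reprove Theorem~\ref{F2}, but says the same proof gives that case. The shared skeleton is:
\begin{itemize}
\item take a minimum semifragment $F$ of a small cutset containing an edge;
\item use triangle-freeness to get an edge $zz'$ inside $F$ (in fact $N(z)\cap N(z')=\emptyset$ and both lie in $F\cup T$, so $|F|\ge 2k+2-|T|\ge k+1$ at once);
\item cross $T$ with a cutset $T'\ni z,z'$.
\end{itemize}
The crossing analysis is the entire proof. Submodularity only pairs opposite corners, so it never directly makes a corner inside $F$ small. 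You have to control the corners in $\overline F$, and that is where your family breaks down.

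Assume, as you may, $x,y\notin\overline{F'}$. If $F\cap\overline{F'}\neq\emptyset$, minimality forces $|(F\cap T')\cup(T\cap T')\cup(T\cap\overline{F'})|\ge k+2$, since this set contains $zz'$. Hence $|(T\cap F')\cup(T\cap T')\cup(\overline F\cap T')|\le k$. When $\overline F\cap F'=\emptyset$ this gives $|F'|<|F|$, which is fine. But when $\overline F\cap F'\neq\emptyset$, that set is a $k$-cutset containing $x$ and $y$. It is a perfectly legitimate edge-separator in your sense, and its semifragment lies in $\overline F$. Neither minimality of $|F|$ nor $\kappa(G)\ge k$ gives a contradiction there; the latter only says its size is exactly $k$. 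Nor does $k\ge 3$ help, since the paper's argument also works for $k=2$.

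\textbf{The missing idea} is the paper's split of $E(G)$ into $E_0$, the edges lying in some $k$-cutset, and $E_1=E(G)\setminus E_0$. The semifragment $F$ is then chosen minimum only over $k$-cutsets containing an edge and $(k+1)$-cutsets containing an edge of $E_1$. A preliminary crossing argument in the paper shows two things: the minimizing cutset $T$ has order $k+1$ with $xy\in E_1$, and every edge inside $F$ lies in $E_1$. With this, the $k$-cutset above through $xy$ is an immediate contradiction. Also, any cutset of order $k+1$ through $zz'$ is again admissible.

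Your stated target is also too narrow, because the contradiction does not always come from a corner of $F$. In the last configuration of the paper's $k=2$ argument:
\begin{itemize}
\item $F\cap\overline{F'}=\emptyset$ and $F\cap T'=\{z,z'\}$;
\item $T=\{x,y,w\}$ with $w\in\overline{F'}$;
\item the cutset around $F\cap F'$ has order $k+2$.
\end{itemize}
The paper then uses triangle-freeness to get, say, $wz\notin E(G)$. So $\{z',x,y\}$ is a cutset containing $xy\in E_1$, with the strictly smaller semifragment $(F\cap F')\cup\{z\}$. Without these two devices, or substitutes for them, the plan does not yet amount to a proof.
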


Although Theorem~\ref{F2} is stated for $k\geq 3$, the same proof also gives the corresponding case $k=2$. For completeness, we state this case separately.

\begin{proposition}\label{propk=2}
If $G$ is a $2$-connected triangle-free graph with $\delta(G)\geq 3$, then $G$ contains an edge $e=xy$ such that $\kappa(G-\{x,y\})\geq 2$.
\end{proposition}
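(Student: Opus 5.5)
We must prove: if $G$ is $2$-connected, triangle-free, and $\delta(G)\ge 3$, then some edge $xy$ has $G-\{x,y\}$ $2$-connected. The plan is to run Fujita's fragment–end argument for Theorem~\ref{F2} with $k=2$, checking the places where $k\ge 3$ might have been used.

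\textbf{Reduction.} First suppose $\kappa(G)\ge 3$. Take any edge $xy$ and suppose $G-\{x,y\}$ has a cut vertex $z$. Then $S=\{x,y,z\}$ is a $3$-cut of $G$. Each component $C$ of $G-S$ receives edges from all three of $x,y,z$, since $\kappa(G)\ge 3$. So the failing edges are constrained by $3$-cuts containing an edge.

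In general we work with ends. Choose an edge $xy$ and a separation $(A,S,B)$ of $G-\{x,y\}$ with $|S|\le 1$ such that $A$ (the fragment) is minimal over all choices of edge and separation. Then we aim for a contradiction by finding a better edge inside $A$.

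\textbf{Key steps.}
\begin{enumerate}[label=(\arabic*)]
\item Each vertex $a\in A$ has at most $|S|+2\le 3$ neighbours outside $A$. Only $x,y$ and the vertex of $S$ are available, and $a$ cannot be adjacent to both $x$ and $y$, since that would form a triangle. Hence $a$ has at most $2$ neighbours outside $A$.
\item Using $\delta\ge 3$, every vertex of $A$ has a neighbour in $A$, so $A$ contains an edge $x'y'$.
\item Triangle-freeness controls $A$. If $A$ were a single vertex, it would have degree at most $2$, which is impossible. So $|A|\ge 2$.
\item Take an edge $x'y'$ inside $A$. If $G-\{x',y'\}$ is not $2$-connected, let $(A',S',B')$ be a separation of it.
\item Apply the standard crossing argument (uncrossing $A$ with $A'$ or $B'$) to find a strictly smaller fragment. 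This contradicts the minimality of $A$.
\end{enumerate}

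\textbf{Main obstacle.} The hardest step is (5). With $k=2$ the separators $\{x,y\}\cup S$ have size at most $3$. The counting inequality $|S\cap A'|+\dots$ must still force one of the corner sets $A\cap A'$, $A\cap B'$ to be a genuine smaller fragment with respect to some edge. We also need a corner that could be empty to yield a vertex of low degree, contradicting $\delta\ge 3$ via step (1).

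I would follow Fujita's inequalities verbatim. Wherever the bound $k\ge 3$ appears, I would check that it is used only to guarantee that the cut vertex and $x,y$ are distinct or that fragments are nonempty; in the $k=2$ setting these facts come instead from $\delta\ge 3$ and the triangle-free degree count in step (1). The case $\kappa(G)=2$ needs the same treatment, with $A$ chosen inside an end of $G$. This confirms that the same proof goes through, as the paper asserts.
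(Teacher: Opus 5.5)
\textbf{There is a genuine gap: step (5) is the whole proof and it is not carried out.} Your extremal framework is the same as the paper's: minimise a semifragment over cutsets of order at most $3$ that span an edge, take an edge $uv$ inside it, and cross with a cut through $u,v$. Steps (1)--(4) are fine. But ``follow Fujita's inequalities verbatim'' is a promise, not an argument, and the paper does not rely on Fujita here; it gives its own crossing analysis.

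\textbf{First missing idea: separating edges that lie in a $2$-cutset.} Your family of cuts $\{x,y\}\cup S$ with $|S|\le 1$ treats all such cuts alike. The paper instead splits $E(G)=E_0\cup E_1$, where $E_0$ consists of the edges lying in a $2$-cutset. It admits only $2$-cuts spanning an $E_0$-edge and $3$-cuts spanning an $E_1$-edge. It then proves first that the minimal cut $S$ is a $3$-cut spanning some $xy\in E_1$, and that every edge inside $F$ lies in $E_1$. Write $A_1=F\cap F',\dots,A_9=\bar F\cap\bar{F'}$ for the nine crossing sets. The split is what disposes of the case where $A_3\neq\emptyset$ and $A_2\cup A_5\cup A_6$ is a cut of order at most $2$ containing $x,y$: for the paper this contradicts $xy\in E_1$. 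In your family it is a legitimate $2$-cut whose semifragment $A_3$ need not be smaller than $F$, so you would need a separate argument there.

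\textbf{Second missing idea: the decisive configuration is not closed by a corner.} You expect the counting to force a corner such as $F\cap F'$ or $F\cap\bar{F'}$ to be a smaller fragment. After one shows $A_7=\emptyset$ and $|A_4|=|A_2\cup A_5|=2$, the remaining configuration is $A_4=\{u,v\}$, $A_2\cup A_5=\{x,y\}$, $A_8=\{z\}$ and $F=A_1\cup\{u,v\}$. Here no corner helps:
\begin{itemize}
\item $A_7$ is empty;
\item $N(A_1)$ may be all of $\{x,y,u,v\}$, which has order $4$;
\item the cut bounding $A_3$ gives no bound on $|A_3|$ relative to $|F|$;
\item the boundary $A_5\cup A_6\cup A_8$ of $A_9$ has order $2$ and need not span an edge.
\end{itemize}
The paper finishes with a set that is not a corner. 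Since $uv\in E(G)$ and $G$ is triangle-free, $z$ is nonadjacent to one of $u,v$, say $u$. Then $\{v,x,y\}$ is a $3$-cut spanning $xy\in E_1$, with semifragment $A_1\cup\{u\}$ of order $|F|-1$, contradicting minimality. This second, structural use of triangle-freeness goes beyond the degree count in your steps (1)--(3), and nothing in your plan supplies it.
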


Theorem \ref{mader1} already covers the case of trees of order two when $k=1$, and the results above settle the corresponding triangle-free cases for trees of order at most three. Therefore, the first nontrivial cases for pendant extensions of paths arise when $m\geq 4$, which is precisely the range considered in this paper.

We now introduce the definition of good and bad pendant extensions for our triangle-free result. Let $P_m^+(i)$ be obtained from the path
$v_1v_2\cdots v_{m-1}$ by adding a new vertex $u$ adjacent to $v_i$. Viewing $v_i$ as the {\it center} of $P_m^+(i)$, the paths $v_{i-1}\cdots v_1$, $v_{i+1}\cdots v_{m-1}$, and $u$ are called the three {\it legs} of $P_m^+(i)$. We say that $P_m^+(i)$ is good if, after deleting $u$ together
with its neighbor $v_i$, the remaining graph has a connected component that is a path of odd order at least three. Otherwise, we say that $P_m^+(i)$ is bad. Equivalently, $P_m^+(i)$ is good if at least one of $i-1$ and $m-1-i$ is odd and at least three. We use Iverson's convention: for an event $A$, $[A]=1$ if $A$ is true, and $[A]=0$ otherwise.

Our triangle-free result is as follows.

\begin{theorem}\label{triangle}
Let $k,m,i$ be positive integers with $m\geq 4$ and $1\leq i\leq m-1$, and let $(X,Y)$ be the bipartition of $P_m^+(i)$. Every $k$-connected triangle-free graph $G$ with $\delta(G)\ge k+\max\{|X|,|Y|\}+[P_m^+(i)\text{ is bad}]$ contains a subgraph $T\cong P_m^+(i)$ such that $\kappa(G-V(T))\geq k$. Moreover, if $\kappa(G)=k$, then $T$ can be chosen so that $T\subseteq F$, where $F$ is an arbitrary end of $G$.
\end{theorem}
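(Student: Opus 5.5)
We would follow the standard fragment/end framework used for Theorem~\ref{F1}. If $\kappa(G)\geq k+1$, adding a new vertex adjacent to all vertices keeps the hypotheses and reduces to the critical case. So assume $\kappa(G)=k$ and fix an end $F$: a fragment containing no smaller fragment. Let $S=N(F)$ with $|S|=k$ and $\bar F=V(G)\setminus(F\cup S)$. Triangle-freeness gives the key degree estimate. A vertex $x\in F$ has $\deg(x)\geq k+\max\{|X|,|Y|\}+[\text{bad}]$, at most $k$ neighbours in $S$, and its neighbourhood is independent. Hence $F$ is large (it contains the neighbourhood of $x$ outside $S$, of size at least $\max\{|X|,|Y|\}$). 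Moreover, $x$ and a neighbour $y$ together see at least $2\max\{|X|,|Y|\}$ vertices of $F$, counted with disjointness, since $N(x)\cap N(y)=\emptyset$.

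The goal is a copy $T\cong P_m^+(i)$ inside $F$ such that every vertex of $G-V(T)$ outside $S$ keeps enough neighbours, and no new small separator appears. Suppose $T\subseteq F$ and $G-V(T)$ has a separator $S'$ with $|S'|\le k-1$. Then $S'\cup V(T)$ is a separator of $G$. The usual crossing argument with the end $F$ (submodularity of $|N(\cdot)|$ and the minimality of $F$) then forces a component $C$ of $G-V(T)-S'$ with $C\subseteq F$. Every vertex $z\in C$ has all its neighbours in $C\cup S'\cup V(T)\cup S$, and the triangle-free counting shows $C$ is small. The crucial point is bipartiteness: in a triangle-free graph, $z$ can be adjacent to at most $|X|$ vertices of $T$ in one class, or $|Y|$ in the other, only if those vertices are pairwise nonadjacent, and $X$ and $Y$ are the maximal independent sets of the tree. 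This explains the term $\max\{|X|,|Y|\}$. We then get $|N(z)\cap(C\cup S')|\ge k+[\text{bad}]$ roughly, which gives $|C|\geq 2$. Choosing an edge in $C$ and using disjoint neighbourhoods then produces a contradiction, unless $T$ was chosen badly.

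Concretely, we would build $T$ greedily in the way Fujita builds paths in Theorem~\ref{F1}. Apply Theorem~\ref{F1} inside the end, with $m$ replaced by the order of the long path $v_1\cdots v_{m-1}$ (its degree bound $k+m/2$ is weaker than ours), to get a path in $F$ starting at a prescribed vertex. Then choose the centre $v_i$ and the pendant $u$ among its many $F$-neighbours, which is possible since $\deg_F(v_i)\ge \max\{|X|,|Y|\}>$ the number of tree vertices already used. Among all such copies, take $T$ minimising the size of the smallest "bad" component $C$ as above (or lexicographically extremal), and derive a contradiction by rerouting. If $C$ exists, some end vertex of a leg, or $u$ itself, can be swapped for a vertex of $C$, reducing $C$.

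The main obstacle is the extremal rerouting step, and specifically the parity issue defining good and bad. When we swap a leaf of $T$ for a vertex in $C$, the bipartition class sizes may change. The counting $|N(z)\cap V(T)|\le\max\{|X|,|Y|\}$ is tight exactly when $z$ is adjacent to a whole colour class. For a bad tree, deleting $u,v_i$ leaves legs of even length or length one. Then no single-leaf swap of an odd leg of length at least $3$ is available to break such a full adjacency, which is why one extra degree is needed there. I would first try to show that, in the good case, the odd leg of order at least three can be re-embedded: rotate it so that the leaf lies in the opposite class, which kills the tight configuration. In the bad case the surplus $+1$ directly gives $|N(z)\cap C|\geq 1$, which suffices. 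Small $m$ (for instance $K_{1,3}$) may need separate verification using Proposition~\ref{propk=2} and Theorem~\ref{F2}.
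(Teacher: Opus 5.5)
There is a genuine gap: the reduction to $\kappa(G)=k$ is wrong, and the central step, choosing $T$ so that $G-V(T)$ stays $k$-connected, is never proved.

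\textbf{The reduction.} Adjoining a vertex adjacent to every vertex of $G$ forms a triangle with each edge of $G$. It also raises the connectivity to $\kappa(G)+1$ instead of creating a $k$-cut. So it neither preserves the hypotheses nor reduces to $\kappa(G)=k$. The paper does not reduce at all; it runs the same argument with $U=G$. If $\kappa(G-V(T))>k$, it simply extends $T$ by a free neighbour. Otherwise every end of $G-V(T)$ must have a neighbour in $T$, since its neighbourhood would otherwise be a $k$-cut of $G$.

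\textbf{The counting.} The count in your second paragraph, even using the disjoint neighbourhoods of an edge of $C$, only bounds $|C|$ from below. It cannot exclude $C$, and an arbitrary copy of $P_m^+(i)$ inside the end can indeed leave a component cut off by fewer than $k$ vertices. So everything rests on the choice of $T$, which is exactly what you leave unproved.

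\textbf{The rerouting.} You take a path from Theorem~\ref{F1}, attach a pendant at $v_i$, and then minimise ``the smallest bad component'' by ``swapping a leaf into $C$''. You give no proof of any of the following:
\begin{itemize}
\item that the relevant tree vertex has a neighbour in $C$;
\item that the modified tree still leaves a graph of connectivity at least $k$;
\item that the new bad component is strictly smaller (the released vertex may create new small components elsewhere).
\end{itemize}
Moreover, a global path from Theorem~\ref{F1} says nothing about where the ends of $G-V(P)$ attach to $P$. That information is what is needed to place the pendant.

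\textbf{What the paper's proof uses instead.} The missing ideas are these.
\begin{itemize}
\item Induction on the order of the tree. The paper deletes the last vertex of a leg $L_1$: a longest leg if the tree is bad, an odd leg of length at least $3$ if it is good. This choice ensures $P_m^+(i)-v_1^{l_1}$ inherits the degree hypothesis.
\item Working with an end $F$ of $G-V(T)$ rather than of $G$.
\item Discarding tree vertices with no neighbour in $F$ without changing the connectivity or the end, via Lemma~\ref{lem:add-vertex-outside-end}. This applies because a vertex sees at most $\max(T)$ vertices of $T$, so every tree vertex has at least $k+1$ neighbours off $T$.
\item Regrowing the deleted tail of $L_1$ inside $F$ from a prescribed neighbour, using the endvertex property of Theorem~\ref{F1} applied to $G-V(T^\star)$.
\item When only $L_2'$ sees $F$, a sliding argument over paths of order $l_2+2$ with exactly one endvertex adjacent to an end $F_P$ of minimum order.
\end{itemize}

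\textbf{Where good/bad enters.} It does not come from ``full adjacency to a colour class''. Besides the inheritance above, regrowing a tail of length $l_1-j+1$ requires every vertex outside $T^\star$ to have at least $k+\lceil (l_1-j+1)/2\rceil$ neighbours outside $T^\star$. This holds automatically when $l_1$ is odd, and uses the extra $1$ when $l_1$ is even.
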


Let $P=P_m^+(1)$ be a path of order $m\ge4$, with bipartition
$(X,Y)$. Then $\max\{|X|,|Y|\}=\lceil \frac{m}{2}\rceil$.
Deleting the added leaf together with its neighbor leaves
a single path of order $m-2$. Hence $P_m^+(1)$ is bad
exactly when $m$ is even. Since $\delta(G)$ is an integer,
the degree condition in Theorem \ref{F1} is equivalent to $\delta(G)\ge k+\left\lceil\frac{m+1}{2}\right\rceil
=k+\max\{|X|,|Y|\}+[m\text{ is even}]$. Thus Theorem \ref{triangle} extends the corresponding existence result
for paths to pendant extensions of paths.

The rest of this paper is organized as follows. In Section \ref{preliminaries}, we summarize the notation and terminology used throughout the paper. In Section \ref{genthm}, we complete the proof of Theorem \ref{1tm-t-2}. The case of $K_{1,3}$ is established in Subsection \ref{K13}, while the complete proof is given in Subsection \ref{genfull}. Section \ref{trithm} is devoted to the proof of Theorem \ref{triangle}. We first treat the case $K_{1,3}$ in Subsection \ref{tri4} and then present the full proof in Subsection \ref{trifull}. Section \ref{Prop1.1} contains the proof of Proposition \ref{propk=2}.
\section{Preliminaries}\label{preliminaries}
In this section, we introduce some notation and definitions and collect several lemmas that will be used in the proofs of our main results.

For a graph $G$ and its vertex $x$, let $d_G(x)$ and $N_G(x)$ denote
the {\it degree} and {\it neighborhood} of $x$ in $G$, respectively.
When considering a vertex subset $X\subseteq V(G)$, we define $N_G(X)=\cup_{x\in X}N_G(x)\setminus X$. For positive integers $a<b$, let $[a]=\{1,2,\cdots,a\}$ and $[a,b]=\{a,a+1,\cdots, b\}$.

A graph $G$ is called a {\it complete graph} if every pair of distinct vertices is adjacent. The complete graph with $n$ vertices is denoted by $K_n$. A bipartite graph $G$ with bipartition $(X,Y)$ is called a {\it complete bipartite graph} if every vertex of $X$ is adjacent to every vertex of $Y$. The complete bipartite graph with $|X|=m$ and $|Y|=n$ is denoted by $K_{m,n}$.
For a subgraph $H$, write $|H|=|V(H)|$ and $N_G(H)=N_G(V(H))$. Whenever a subgraph occurs in a set intersection or union, it is identified with its vertex set.

In a graph $G$, for two disjoint paths $P=v_1v_2\dots v_k$ and $Q=u_1u_2\dots u_m$ such that $v_ku_1\in E(G)$,  we use $PQ$ to denote the path $v_1v_2\dots v_ku_1u_2\dots u_m$. We simply write $vQ$ rather than  $PQ$ if $P$ only contains one vertex $v$, and write $Pu$ rather than $PQ$ if $Q$ only contains one vertex $u$. For a path $P$ and any vertices $x,y\in V(P)$, we use $P[x,y]$ to denote the subpath of $P$ from $x$ to $y$, and use $P(x,y)$ to denote the path obtained from $P[x,y]$ by deleting vertices $x$ and $y$.

A {\it vertex cut set} of a graph $G$ is a subset $S\subseteq V(G)$ such that $G-S$ is disconnected or trivial, if $|S|=k$, then we say $S$ is a {\it $k$-cutset} of $G$. Given a vertex cut set $S$, a non-empty union $F$ of components of $G-S$ with $G-(S\cup V(F))\neq \emptyset$ is called a {\it semifragment} of $G$ with respect to $S$, its complement $\bar{F}=G-(S\cup V(F))$ is the {\it complementary semifragment}. If $S$ is a minimum cut set of $G$, then $F$ is called a {\it fragment} of $G$ to $S$ if $N_G(F)=S$. A fragment that contains no other fragments of $G$ is called an {\it end} of $G$. Every non-complete graph contains at least two ends.
\begin{lemma}[\upshape Mader~\cite{Mader1}]\label{mpath}
Let $k,m$ be positive integers, let $G$ be a graph with $\kappa(G)=k$ and let $F$ be an end of $G$. If $\delta(G)\geq \left\lfloor\frac{3k}{2}\right\rfloor+m-1$, then for every
$u\in V(F)$ there exists a path $P\subseteq F$ of order $m$
starting at $u$ such that $\kappa(G-V(P))\ge k$.
\end{lemma}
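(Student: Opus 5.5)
The plan is to argue by induction on $m$, carrying the strengthened statement as written: the path lies in the end $F$ and starts at an arbitrary prescribed vertex $u\in V(F)$. Fixing both the end and the start vertex is what will let the inductive step re-route. Throughout I would use two standard facts.
\begin{enumerate}[label=(\roman*)]
\item Fragments meet well: if $F$ is an end with respect to a $k$-cutset $S$, and $A$ is a fragment with respect to another $k$-cutset $S'$, then either $F\subseteq A$, or $F\cap A=\emptyset$, or the intersection and union inequalities force $|F\cap S'|$ and $|A\cap S|$ to be balanced. This is the usual submodularity of $|N(\cdot)|$.
\item Fragments are large: every vertex of a fragment $A$ has all its neighbours in $A\cup N(A)$, so $|A|\ge \delta(G)-|N(A)|+1$.
\end{enumerate}

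\emph{Base case $m=1$.} Suppose some $u\in V(F)$ has $\kappa(G-u)\le k-1$, and let $T$ be a $(k-1)$-cutset of $G-u$. Then $S'=T\cup\{u\}$ is a minimum cutset of $G$ containing $u$. Let $A$ be a fragment for $S'$ and $\bar A$ the complementary one.
\begin{itemize}
\item Since $u\in F$ and $F$ is an end, $F$ contains neither $A$ nor $\bar A$ properly, so by (i) one of the crossing pieces, say $F\cap A$, is nonempty but small.
\item The submodularity count gives $|F\cap A|\le |S\cap \bar A|$ or the analogous bound, and by symmetry we may take this piece to have at most $k/2$ boundary vertices.
\item Every vertex of $F\cap A$ then has degree at most $|F\cap A|-1+k+\lfloor k/2\rfloor$ or so. Combined with $\delta\ge\lfloor 3k/2\rfloor$ and (ii), this shows $F\cap A$ would itself be a fragment strictly inside $F$, contradicting that $F$ is an end.
\end{itemize}
This is essentially the Chartrand--Kaigars--Lick argument localized to an end.

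\emph{Inductive step.} Assume the statement for $m-1$ and fix $u\in V(F)$. Among all paths $P\subseteq F$ of order $m$ starting at $u$, choose $P$ to minimize a potential. The potential is, first, the size of a smallest fragment $A$ of $G-V(P)$ with respect to a cutset $T$ with $|T|\le k-1$; second, lexicographically, something like $|A\cap F|$. The proof then splits into the following steps.
\begin{enumerate}
\item The extendable path exists. Since $\delta(G)\ge\lfloor 3k/2\rfloor+m-1$, induction applied to $m-1$ gives a path $P'$ from $u$ with $\kappa(G-V(P'))\ge k$. Its last vertex $w$ has at least $\delta-(m-2)$ neighbours outside $P'$, and one must show that some neighbour in $F$ can be appended. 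So the set of candidate paths is nonempty.
\item If $\kappa(G-V(P))\le k-1$, then $T\cup V(P)$ separates $A$ from $\bar A$, and by (ii), $|A|\ge \delta-(k-1)-m+1\ge \lfloor k/2\rfloor+1$.
\item Let $P_0\subseteq V(P)$ be the set of path vertices with neighbours in $A$. Then $T\cup P_0$ is a cutset of $G$, hence has size at least $k$. Applying (i) to the end $F$ and the fragment containing $A$ shows that $A\cap F\neq\emptyset$, and that the last vertex of $P$, or some vertex of $P$ near its end, has a neighbour in $A\cap F$.
\item Re-route: replace the terminal segment of $P$ by a walk into $A\cap F$, keeping the start $u$ and the order $m$. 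The new path leaves a strictly smaller bad fragment, or none at all, contradicting minimality.
\end{enumerate}

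The main obstacle is step 4. The re-routed path must stay inside $F$ and must not create a new small fragment elsewhere. This needs a careful choice of the potential, probably minimizing $|A|$ over all paths and all cuts simultaneously, so that any new cut crosses the old one and (i) applies. I would first try the potential $\min |A|$, where $A$ ranges over fragments of $G-V(P)$ disjoint from $\bar F$. I would then check that the crossing inequality, together with the $\lfloor 3k/2\rfloor$ slack, is enough to absorb the up to $m$ path vertices in the boundary count.
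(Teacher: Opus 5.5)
\textbf{There is a genuine gap.} The paper quotes this lemma from Mader without proof, so your proposal has to stand on its own, and its inductive step is not an argument.

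In Step~1, ``one must show that some neighbour in $F$ can be appended'' is the entire difficulty, not a preliminary. Step~3's claim that the last vertex of $P$ (or one ``near its end'') has a neighbour in $A\cap F$ has no justification. Step~4, with a potential you never pin down, is left open by your own account.

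The obstruction is structural. Extending at the free end $w$ of $P'$ requires $w$ to see an end of $G-V(P')$ lying in $F$. Lemma~\ref{subend} only guarantees that \emph{some} vertex of $P'$ does. Re-routing the tail can also create new small fragments that nothing in your potential controls.

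The base case is only gestured at as well. A degree bound like $|F\cap A|-1+k+\lfloor k/2\rfloor$ contradicts nothing when $\delta\ge\lfloor 3k/2\rfloor$, and ``$F\cap A$ would itself be a fragment'' is never derived. It can be repaired by the standard crossing count:
\begin{itemize}
\item Let $S=N_G(F)$ and let $S'\ni u$ be a $k$-cut with sides $A,\bar A$.
\item If $F\cap A\neq\emptyset$, then $|N_G(F\cap A)|\ge k+1$, because $u\notin F\cap A$ and $F$ is an end. Submodularity then forces $\bar F\cap\bar A=\emptyset$; the symmetric statement holds for $F\cap\bar A$.
\item Each of $\bar F$, $A$, $\bar A$ has at least $\lfloor k/2\rfloor+1$ vertices.
\item A short count in each of the three cases (which of $F\cap A$, $F\cap\bar A$ are empty) then gives a contradiction.
\end{itemize}

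The missing idea is to grow the path at its prescribed \emph{start} and apply the strengthened hypothesis in $G-u$ rather than in $G$. This is where the freedom to prescribe the start vertex actually pays off.
\begin{itemize}
\item By the base case, $\kappa(G-u)\ge k$.
\item Since $|F|\ge\delta(G)-k+1\ge\lfloor k/2\rfloor+m\ge 2$, Lemma~\ref{subend} with $W=\{u\}$ gives $\kappa(G-u)=k$ and an end $F'\subseteq F$ of $G-u$ containing a neighbour $v$ of $u$.
\item Since $\delta(G-u)\ge\lfloor 3k/2\rfloor+(m-1)-1$, the induction hypothesis applied to $(G-u,F',v)$ gives a path $P'\subseteq F'$ of order $m-1$ starting at $v$ with $\kappa((G-u)-V(P'))\ge k$.
\item Then $P=uP'\subseteq F$ is the required path.
\end{itemize}
No potential, minimal counterexample or re-routing is needed.
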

\begin{lemma}\label{subend}
Let $k$ be a positive integer, let $G$ be a graph with $\kappa(G)=k$, let $S$ be a $k$-cutset of $G$ and $F$ be an end of $G$ to $S$. If $\emptyset\neq W\subsetneq F$ and $\kappa(G-V(W))\geq k$, then $\kappa(G-V(W))=k$ and there is an end $F'\subseteq F$ of $G-V(W)$ such that $N_G(W)\cap V(F')\neq \emptyset$.
\end{lemma}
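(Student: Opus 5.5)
The plan has two parts: show that $\kappa(G-V(W))=k$ exactly, and then build the end $F'$ by taking a fragment of $G-V(W)$ inside $F$ that is adjacent to $W$ and shrinking it to an end.

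\textbf{Step 1: $\kappa(G-V(W))=k$.} Put $G'=G-V(W)$. Since $W\subsetneq F$, the set $F\setminus W$ is nonempty. The set $\bar F$ is also nonempty, and both sets survive in $G'$. The set $S$ is disjoint from $W$ and still separates $F\setminus W$ from $\bar F$ in $G'$. Hence $\kappa(G')\le k$, and together with the hypothesis this gives $\kappa(G')=k$.

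\textbf{Step 2: a fragment of $G'$ that meets $N_G(W)$.} Let $T$ be the union of the components of $G'-S$ that lie in $F\setminus W$. Some component $C$ of $G'[F\setminus W]$ must have a neighbour in $W$. Otherwise $N_G(C)\subseteq S$, and since $C\subsetneq F$ while $F$ is an end, this is impossible: $N_G(C)$ would be a cutset of size at most $k$, hence exactly $S$, making $C$ a fragment properly inside $F$.

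Now $C$ is a semifragment of $G'$ with respect to $S$. Its neighbourhood $N_{G'}(C)\subseteq S$ separates $C$ from $\bar F$, so $|N_{G'}(C)|\ge k$, and therefore $N_{G'}(C)=S$. Thus $C$ is a fragment of $G'$ with respect to $S$.

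\textbf{Step 3: shrinking to an end.} Take an end $F'$ of $G'$ with $F'\subseteq C$; this exists by minimality, since every fragment contains an end. It remains to show that $F'$ meets $N_G(W)$. Suppose not, and let $S'=N_{G'}(F')$, a $k$-cutset of $G'$. Then $N_G(F')=S'$, because $F'$ has no neighbours in $W$. This makes $S'$ a $k$-cutset of $G$, provided $G-(S'\cup F')$ is nonempty, which holds since it contains $W$. So $F'$ is a fragment of $G$ properly contained in $F$ (it misses $W$), contradicting that $F$ is an end.

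\textbf{Main obstacle.} Steps 2 and 3 as written use the same contradiction, and only Step 3 is really needed: for any end $F'\subseteq C$, the assumption $N_G(W)\cap F'=\emptyset$ yields a fragment of $G$ strictly inside $F$. The main step is therefore to guarantee that some end of $G'$ lies inside $F$. That is exactly what $C$ supplies: once $C$ is shown to be a fragment of $G'$, it contains an end. A simpler route avoids the neighbour argument for $C$ altogether. Take any component $C$ of $G'[F\setminus W]$. Then $N_{G'}(C)\subseteq S$ separates $C$ from $\bar F$, so $N_{G'}(C)=S$ and $C$ is a fragment of $G'$ inside $F$. Choose any end $F'\subseteq C$ of $G'$ and finish with the argument of Step 3.
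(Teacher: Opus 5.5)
Your proposal is correct and follows the paper's proof. Step 1 is the paper's observation that $S$ remains a $k$-cutset of $G-V(W)$, and Step 3 is its closing contradiction: an end $F'$ of $G-V(W)$ avoiding $N_G(W)$ satisfies $N_G(F')=N_{G-V(W)}(F')$, so it is a fragment of $G$ properly inside $F$. The differences are cosmetic: the paper takes all of $F-V(W)$ as the fragment of $G-V(W)$ containing $F'$ rather than a single component $C$, and your neighbour argument in Step 2 is, as you note yourself, unnecessary (the set $T$ you define is also never used).
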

\begin{proof}
Assume $F$ is an end of $G$ and $W\subsetneq F$ such that $\kappa(G-V(W))\geq k$. Since both $F-V(W)$ and $\bar F$ are nonempty, $S$ is still a $k$-cutset of $G-V(W)$. Thus, $\kappa(G-V(W))=k$ and $F-V(W)$ is a fragment of $G-V(W)$ to $S$. Let $H=G-V(W)$. Choose an end $F'$ of $H$ contained in the fragment $F-V(W)$. If $N_G(W)\cap V(F')=\emptyset$, then $N_G(F')=N_H(F')$ has size $k$, so $F'$ is a fragment of $G$ properly contained in $F$, a contradiction.

This completes the proof of Lemma \ref{subend}.
\end{proof}
\begin{lemma}\label{lem:add-vertex-outside-end}
Let $G$ be a graph with $\kappa(G)=k$, let $S$ be a $k$-cutset of $G$ and let $F$ be an end of $G$ to $S$. Let $Q$ be a set of new vertices such that every vertex of $Q$ has at least $k$ neighbors in $G$ and no neighbor in $F$, where the edges in $Q$ are arbitrary. Denote the resulting graph by $G'$. Then $\kappa(G')=k$, and $F$ is still an end of $G'$ to $S$.
\end{lemma}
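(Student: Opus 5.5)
We need to prove Lemma \ref{lem:add-vertex-outside-end}: after adding the vertex set $Q$, we have $\kappa(G')=k$, and $F$ is still an end of $G'$ to $S$.

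\textbf{Step 1: $\kappa(G')\le k$.} Since no vertex of $Q$ has a neighbor in $F$, in $G'-S$ the set $F$ is still a union of components. The rest, $\bar F\cup Q$, is nonempty because $\bar F\neq\emptyset$. So $S$ separates $F$ from $\bar F$ in $G'$, and $\kappa(G')\le k$.

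\textbf{Step 2: $\kappa(G')\ge k$.} Suppose $T\subseteq V(G')$ with $|T|<k$. We show $G'-T$ is connected and nontrivial. The graph $G-T$ is connected, since $\kappa(G)=k$ and $|T\cap V(G)|<k$. Each vertex $q\in Q\setminus T$ has at least $k$ neighbors in $G$, so it keeps a neighbor in $G-T$. Hence $G'-T$ is connected. It is nontrivial because $|V(G)|\ge k+2$ (as $F,\bar F,S$ are all nonempty with $|S|=k$) and $|T|<k$. Therefore $\kappa(G')=k$, and $S$ is a minimum cutset of $G'$.

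\textbf{Step 3: $F$ is a fragment of $G'$.} Since $Q$ has no neighbors in $F$, we get $N_{G'}(F)=N_G(F)=S$, so $F$ is a fragment of $G'$ to $S$.

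\textbf{Step 4: $F$ is an end of $G'$.} Suppose some fragment $F'$ of $G'$ with respect to a $k$-cutset $S'$ satisfies $F'\subsetneq F$. Then $F'\subseteq V(G)$ and contains no vertex of $Q$. Because $Q$ has no neighbors in $F$, neighbors of $F'$ in $G'$ come only from $G$, so $N_G(F')=N_{G'}(F')=S'$, which has size $k$.

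It remains to check that $F'$ is a genuine fragment of $G$, that is, that $G-(S'\cup F')$ is nonempty. Since $F'\subseteq F$, we have $S'\subseteq F\cup S$. Then $\bar F\setminus S'=\bar F\neq\emptyset$, so the complement is nonempty. Thus $F'$ is a fragment of $G$ properly contained in $F$, contradicting that $F$ is an end of $G$.

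Hence $F$ remains an end of $G'$ to $S$. The only step needing care is this last check that $S'$ misses $\bar F$; it follows directly from $N(F')\subseteq F\cup S$.
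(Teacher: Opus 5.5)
Your proof is correct and follows the paper's argument essentially step for step. Both proofs show that $G'-X$ is connected for $|X|\le k-1$ using the $k$ neighbors of each new vertex, both note that $N_{G'}(F)=N_G(F)=S$, and both pull any smaller fragment of $G'$ inside $F$ back to a fragment of $G$; your extra checks fill in details the paper leaves implicit, namely that $G'-T$ is nontrivial and that $\bar F$ survives outside $S'\cup F'$ because $N_G(F')\subseteq F\cup S$.
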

\begin{proof}
Let $X\subseteq V(G')$ with $|X|\le k-1$, and let $X_G=X\cap V(G)$. Since $G$ is $k$-connected, $G-X_G$ is connected. For every $u\in Q\setminus X$, the vertex $u$ has at least $k$ neighbors in $V(G)$. Since $|X_G|\le k-1$, at least one of these neighbors lies in $V(G)\setminus X_G$. Thus every vertex of $Q\setminus X$ is adjacent to the connected graph $G-X_G$. Consequently, $G'-X$ is connected. Therefore, $G'$ is $k$-connected.

Since no vertex of $Q$ has a neighbor in $F$, we have $N_{G'}(F)=N_G(F)=S$. Hence $S$ remains a $k$-cut of $G'$. If $F$ contains a fragment $A$ of $G'$ properly, then $N_G(A)=N_{G'}(A)$, since no new vertex has a neighbor in $F$. Thus $A$ is also a fragment of $G$, contradicting the assumption that $F$ is an end of $G$. Therefore, $F$ remains an end of $G'$ to $S$.
\end{proof}
For a tree $T$ with bipartition $(X,Y)$, define $\max(T)=\max\{|X|,|Y|\}$.
\begin{lemma}\label{vT^*}
Let $G$ be a triangle-free graph and let $T\cong P_j^+(i)$ be a subgraph of $G$. Then $|N_G(v)\cap V(T)|\leq \max(T)$ for any $v\in V(G)$.
\end{lemma}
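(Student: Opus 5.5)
The plan is to use that $T$ is bipartite and that $G$ is triangle-free. Let $(X,Y)$ be the bipartition of $T$ and fix $v\in V(G)$.

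First I will show that, since $G$ has no triangles, $N_G(v)\cap V(T)$ is an independent set in $G$. Consequently it contains no edge of $T$: if two neighbors of $v$ were adjacent in $T$, they would form a triangle with $v$. (Should $v$ itself lie in $T$, it is not counted among its own neighbors, so this case needs no special treatment.) Thus it suffices to show that every independent set of the tree $T$ that also spans no edge of $T$ has at most $\max(T)$ vertices. We only need independence inside $T$, which is weaker than independence in $G$.

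Second, I will bound the independence number of $T\cong P_j^+(i)$ directly. Write $T$ as the path $v_1\cdots v_{j-1}$ together with the pendant vertex $u$ attached to $v_i$.

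\emph{Case 1: $I$ avoids $u$.} Then $I$ is an independent set in the path $v_1\cdots v_{j-1}$, so $|I|\le\lceil (j-1)/2\rceil$. The larger colour class of that path has exactly this many vertices, and it is contained in $X$ or in $Y$. Hence $|I|\le \max(T)$.

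\emph{Case 2: $I$ contains $u$.} Then $v_i\notin I$, so $I\setminus\{u\}$ is independent in the two subpaths $v_1\cdots v_{i-1}$ and $v_{i+1}\cdots v_{j-1}$. This gives
$$|I|\le 1+\lceil (i-1)/2\rceil+\lceil (j-1-i)/2\rceil.$$
I will compare this with the colour class of $T$ that contains $u$, which also contains $v_{i\pm1}, v_{i\pm3},\dots$. That class has exactly $1+\lceil (i-1)/2\rceil+\lceil (j-1-i)/2\rceil$ vertices, since in each subpath the vertices at odd distance from $v_i$ start from the endpoint adjacent to $v_i$ and form the larger half (the ceiling). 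So $|I|$ is at most the size of one colour class, which is at most $\max(T)$.

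The only step needing care is this parity count in Case 2, namely checking that the ceilings match the class containing $u$. It is routine. Alternatively, one can avoid the case split by invoking the König-type fact that in a bipartite graph having a perfect or near-perfect matching structure the independence number equals the larger side. The case computation, however, is self-contained and short.
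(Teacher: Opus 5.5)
Your proof is correct and is essentially the paper's argument: triangle-freeness makes $N_G(v)\cap V(T)$ independent in $T$, and you then split on whether the pendant leaf $u$ lies in it. In one case you bound by the independence number $\lceil (|T|-1)/2\rceil$ of the path $T-u$; in the other you discard its neighbour $v_i$ and count the colour class containing $u$. The only cosmetic difference is in the first case: you compare with $\max(T)$ by noting that the larger class of the path lies inside a class of $T$, whereas the paper checks $\lceil (|P_1|+|P_2|+1)/2\rceil\le\max(T)$ arithmetically.
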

\begin{proof}
Choose an arbitrary vertex $v\in V(G)$. Since $G$ is triangle-free, $N_G(v)\cap V(T)$ is an independent set of $T$ (i.e., no two vertices in
$N_G(v)\cap V(T)$ are adjacent in $T$). Let $t$ be a leaf of $T$ such that $T-t$ is a path, and let $N_T(t)=\{t'\}$. Let $T-\{t,t'\}=P_1\cup P_2$, where $P_1$ and $P_2$ are vertex-disjoint paths, either of which may be empty. An empty path is understood to have order zero. Note that $\max(T)=\left \lceil \frac{|P_1|}{2} \right \rceil +\left \lceil \frac{|P_2|}{2} \right \rceil +1$.

If $t\notin N_G(v)$, since $T-\{t\}$ is a path, $|N_G(v)\cap V(T)|\leq\left \lceil \frac{|T|-1}{2}\right \rceil=\left \lceil \frac{|P_1|+|P_2|+1}{2}\right \rceil\leq \max(T)$. If $t\in N_G(v)$, then $t'\notin N_G(v)$ and thus $|N_G(v)\cap V(T)|\leq\left \lceil \frac{|P_1|}{2}\right \rceil+\left \lceil \frac{|P_2|}{2}\right \rceil+1=\max(T)$.

This completes the proof.
\end{proof}
\section{Proof of Theorem \ref{1tm-t-2}}\label{genthm}

\subsection{The case of $K_{1,3}$}\label{K13}
We first prove Theorem \ref{1tm-t-2} for $K_{1,3}$. Suppose, to the contrary, that there exists a $k$-connected graph $G$ satisfying $\delta(G)\ge \left\lfloor \frac{3k}{2}\right\rfloor+3$ and that
$U$ contains no subgraph $T\cong K_{1,3}$ such that $\kappa(G-V(T))\geq k$, where $U$ is an end of $G$ if $\kappa(G)=k$, and $U=G$ otherwise.
\begin{definition}
Let $u_1\in V(U)$, let $P_{u_1}=u_2u_3u_4$ be a path of order $3$ in $U-\{u_1\}$, let $H=G-(\{u_1\}\cup V(P_{u_1}))$. Then $(u_1,P_{u_1},F)$ is called a triple of $G$ if
\begin{enumerate}[label=(\roman*)]
\item $\kappa(H)=k$;
\item $F\subseteq U$ is an end of $H$;
\item $N_{G}(u_2)\cap V(F)=\emptyset$ and $N_{G}(u_4)\cap V(F)\neq\emptyset$.
\end{enumerate}
\end{definition}
We first establish the existence of a triple in $G$.
\begin{lemma}\label{Gtriple}
$G$ contains a triple.
\end{lemma}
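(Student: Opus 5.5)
The plan is to build the triple from a connectivity keeping path of order $4$, and then to fix the labelling using the fact that $G$ is assumed to contain no good $K_{1,3}$ in $U$.

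\textbf{Step 1: a path of order $4$.} Since $\delta(G)\ge\lfloor\frac{3k}{2}\rfloor+3$, I obtain a path $P=abcd\subseteq U$ with $\kappa(G-V(P))\ge k$. If $\kappa(G)=k$, this comes from Lemma~\ref{mpath} with $m=4$ applied to the end $U$. If $\kappa(G)\ge k+1$, it comes from Theorem~\ref{mader1}, and here $U=G$. Put $H=G-V(P)$.

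\textbf{Step 2: $\kappa(H)=k$, and an end $F\subseteq U$ of $H$.} If $\kappa(G)=k$, then $P\subsetneq U$ because $|U|\ge\delta(G)-k+1>4$. Lemma~\ref{subend} then gives $\kappa(H)=k$ and an end $F\subseteq U$ of $H$ with $N_G(P)\cap V(F)\ne\emptyset$.

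If $\kappa(G)\ge k+1$, suppose $\kappa(H)\ge k+1$. Choose $w\in N_G(b)\setminus V(P)$ and set $T=\{a,b,c,w\}\cong K_{1,3}$ with center $b$. Then $G-T$ is $(H-w)+d$. Here $H-w$ is $k$-connected, and $d$ has at least $\delta(G)-3\ge k$ neighbours in $H-w$. So $G-T$ is $k$-connected, which contradicts the assumption. Hence $\kappa(H)=k$, and I take any end $F$ of $H$, which lies in $U=G$.

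In both cases some vertex of $P$ has a neighbour in $F$. Otherwise $N_G(F)=N_H(F)$ would have size $k$. When $\kappa(G)\ge k+1$ this is impossible outright. When $\kappa(G)=k$, it makes $F$ a fragment of $G$ properly contained in the end $U$, which is also impossible.

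\textbf{Step 3: labelling.} A triple only needs a vertex $u_1$ of $P$ and a path $u_2u_3u_4$ on the other three vertices, since $H$ depends only on $V(P)$. The natural candidates are $(a,bcd)$, $(a,dcb)$, $(d,abc)$ and $(d,cba)$, together with $(b,\dots)$ or $(c,\dots)$ whenever $ac$ or $bd$ is an edge. Let $A=\{x\in V(P): N_G(x)\cap V(F)\neq\emptyset\}$, which is nonempty by Step 2. Provided some vertex lies outside $A$, a short case check on $A$ gives a labelling with $u_2\notin A$ and $u_4\in A$. Indeed, if an end $d$ (say) lies in $A$ and some vertex of $\{a,b,c\}$ does not, I use $u_1\in\{a,b,c\}$ and orient the path so that it starts at a non-neighbour and ends at $d$. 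The configurations where this fails must be treated separately, for instance $A=\{b\}$ or $A=\{c\}$ with no chord available. For these I re-choose $P$ inside $U$ (another application of Lemma~\ref{mpath} starting at a suitable vertex) or choose a different end of $H$.

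\textbf{Main obstacle.} The real difficulty is the case $A=V(P)$, where every vertex of $P$ sends an edge into $F$. I would choose the pair $(P,F)$ over all valid choices so that $|F|$ is minimal. Given $y\in N_G(d)\cap V(F)$, I would try to exhibit a $K_{1,3}$ such as $\{c;b,d,y\}$, or a new path of order $4$ meeting $F$ whose associated end is strictly smaller. The tool here is Lemma~\ref{lem:add-vertex-outside-end}, applied to the vertices of $P$ that were put back, each of which has at least $k$ neighbours. I would try to show that $S\cup\{\text{new vertices}\}$ still separates a proper part of $F$, contradicting the minimality of $|F|$, or else that the resulting $K_{1,3}$ keeps $k$-connectivity, contradicting the assumption on $U$. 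This step is where I expect most of the work.
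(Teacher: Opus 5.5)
Your proposal has a genuine gap. The proof is left unfinished in exactly the cases that matter, and the case check in Step 3 is wrong.

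\textbf{The case check.} When $G[V(P)]$ has no chords, the only labellings available are $(a,bcd)$, $(a,dcb)$, $(d,abc)$ and $(d,cba)$. All four fail precisely when ($b\in A\Leftrightarrow d\in A$) and ($a\in A\Leftrightarrow c\in A$), that is, for $A=\{a,c\}$, $A=\{b,d\}$ and $A=V(P)$. So your claim that a labelling exists whenever some vertex lies outside $A$ is false. Conversely, the cases you flag, $A=\{b\}$ and $A=\{c\}$, are fine: use $(a,dcb)$ and $(d,abc)$.

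\textbf{The bad cases.} For these you offer only a plan: re-choose $P$, minimise $|F|$, try to find a star. The configuration $\{c;b,d,y\}$ with $y\in N_G(d)$ is not a $K_{1,3}$ unless $cy\in E(G)$. More fundamentally, the tool you would need is not available. With $|P|=4$ you only have $\delta(H)\ge\lfloor 3k/2\rfloor-1$, so Lemma~\ref{mpath} with $m=1$ does not let you delete a vertex $z\in V(F)$ and keep $k$-connectivity. Hence you cannot turn ``$b$ has a neighbour $z$ in $F$'' into a connectivity-keeping star $\{b;a,c,z\}$.

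\textbf{Minor slip in Step 2.} The vertex $d$ may be adjacent to all of $a,b,c,w$. So it is only guaranteed $\delta(G)-4$ neighbours in $H-w$, and this is not $\ge k$ when $k=1$.

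\textbf{The missing idea: start one vertex shorter.}
\begin{enumerate}
\item Take a path $u_1u_2u_3\subseteq U$ with $\kappa(G-\{u_1,u_2,u_3\})=k$, and an end $F\subseteq U$ of $H_1=G-\{u_1,u_2,u_3\}$ that $\{u_1,u_2,u_3\}$ sees.
\begin{itemize}
\item If $\kappa(G)=k$, get it from Lemma~\ref{mpath} and Lemma~\ref{subend}.
\item If $\kappa(G)\ge k+1$, apply Theorem~\ref{mader1} with $k+1$ to get an edge $u_1u_2$ with $\kappa(G-\{u_1,u_2\})\ge k+1$. Then every $u_3\in N_G(u_2)\setminus\{u_1\}$ gives connectivity exactly $k$; otherwise a star centred at $u_2$ keeps connectivity.
\end{itemize}
\item Now $\delta(H_1)\ge\lfloor 3k/2\rfloor$, so deleting any single vertex of $F$ keeps $k$-connectivity. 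Hence $u_2$ has no neighbour in $F$, since otherwise $\{u_2;u_1,u_3,z\}$ is a connectivity-keeping $K_{1,3}$.
\item Without loss of generality $u_3$ has a neighbour $u_4\in V(F)$. Lemma~\ref{subend} applied to $H_1-u_4$ gives an end $F'\subseteq F$ that $u_4$ sees. Since $F'\subseteq F$, $u_2$ automatically does not see $F'$.
\end{enumerate}
Then $(u_1,u_2u_3u_4,F')$ is a triple, and no minimality argument is needed.

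Your bad cases can each be repaired by discarding a suitable endpoint of $abcd$ ($d$ when $A=\{a,c\}$ or $A=V(P)$, $a$ when $A=\{b,d\}$) and running exactly this argument. That shows the order-$4$ path is a detour rather than a simplification.
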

\begin{proof}
To prove the lemma, we first prove the following claim.
\begin{claim}\label{P}
There exists a path $P\subseteq U$ of order $3$ satisfying $\kappa(G-V(P))=k$ and there exists an end $F\subseteq U$ of $G-V(P)$ such that $N_G(P)\cap V(F)\neq \emptyset$.
\end{claim}
\begin{proof}
We first consider the case $\kappa(G)\geq k+1$. Observe that $\delta(G)\geq \left\lfloor \frac{3k}{2}\right\rfloor+3\ge \left\lfloor \frac{3(k+1)}{2}\right\rfloor+1$. By Theorem \ref{mader1}, $G$ contains an edge $u_1u_2$ such that $\kappa(G-\{u_1,u_2\})\ge k+1$.

We shall show that for every $u_3\in N_G(u_2)\setminus\{u_1\}$, we have $\kappa(G-\{u_1,u_2,u_3\})=k$. Indeed, deleting one vertex from the $(k+1)$-connected graph $G-\{u_1,u_2\}$ gives $\kappa(G-\{u_1,u_2,u_3\})\ge k$. Suppose that there exists $u_3\in N_G(u_2)\setminus \{u_1\}$ such that $\kappa(G-\{u_1,u_2,u_3\})\ge k+1$. Since $\delta(G)>3$, we may choose $u_2'\in N_G(u_2)\setminus\{u_1,u_3\}$. Then $G[\{u_1,u_2,u_3,u_2'\}]$ contains a copy of $K_{1,3}$ and $\kappa(G-\{u_1,u_2,u_3,u_2'\})\ge k$, a contradiction.

Fix $u_3\in N_G(u_2)\setminus\{u_1\}$, and let $H_0=G-\{u_1,u_2,u_3\}$. Then $\kappa(H_0)=k$. Choose an end $F$ of $H_0$, and let $S=N_{H_0}(F)$. Since $\kappa(G)\ge k+1$, we must have $N_G(V(u_1u_2u_3))\cap V(F)\neq\emptyset$. Otherwise, $S$ is a $k$-cut of $G$, contradicting $\kappa(G)\geq k+1$. Thus, $u_1u_2u_3$ is the desired path.

It remains to consider the case $\kappa(G)=k$. Since $\delta(G)\geq \left\lfloor \frac{3k}{2}\right\rfloor+3$, there exists a path $P\subseteq U$ of order $3$ such that $\kappa(G-V(P))\geq k$. Since $|U|\geq \delta(G[V(U)])+1\geq (\left\lfloor \frac{3k}{2}\right\rfloor+3-k)+1\geq 4$, $V(U)\setminus V(P)\neq \emptyset$. By Lemma \ref{subend}, we obtain that $\kappa(G-V(P))=k$ and there exists an end $F\subseteq U$ of $G-V(P)$ satisfying $N_G(P)\cap V(F)\neq \emptyset$. Thus, $P$ is the desired path.
\end{proof}

Choose $P=u_1u_2u_3\subseteq U$ and an end $F\subseteq U$ of $G-V(P)$ satisfying $\kappa(G-V(P))=k$ and $N_G(P)\cap V(F)\neq \emptyset$. Let $H_1=G-V(P)$. Now $H_1$ is a graph satisfying $\kappa(H_1)=k$ and $\delta(H_1)\geq \delta(G)-|V(P)|\geq \left\lfloor \frac{3k}{2}\right\rfloor$. Hence, the graph obtained from $H_1$ by deleting an arbitrary vertex from $V(F)$ is still $k$-connected by Lemma \ref{mpath}.

We shall show that $N_G(u_2)\cap V(F)=\emptyset$. Suppose instead that $z\in N_G(u_2)\cap V(F)$. Then the subgraph of $G$ induced by $\{u_1,u_2,u_3,z\}$ contains a copy of $K_{1,3}$ with center $u_2$, while $G-\{u_1,u_2,u_3,z\}=H_1-\{z\}$ is $k$-connected. This contradicts our assumption. Therefore, $u_2\notin N_G(F)$, which implies $|F|\geq \delta(G[V(F)])+1\geq (\left\lfloor \frac{3k}{2}\right\rfloor+3-k-2)+1\geq 2$.

Consequently, $N_G(\{u_1,u_3\})\cap V(F)\neq\emptyset$. Without loss of generality, we may assume that $N_G(u_3)\cap V(F)\neq\emptyset$. Choose $u_4\in N_G(u_3)\cap V(F)$. Let $H=H_1-\{u_4\}$. Note that $V(F)\setminus \{u_4\}\neq \emptyset$ since $|F|\geq 2$. Combined with $\kappa(H)\geq k$, we obtain that $\kappa(H)=k$ and there exists an end $F'\subseteq F$ of $H$ such that $N_G(u_4)\cap V(F')\neq \emptyset$ by Lemma \ref{subend}. Since $F'\subseteq F$, we have $N_G(u_2)\cap V(F')=\emptyset$. Therefore $(u_1,u_2u_3u_4,F')$ is a triple of $G$.
\end{proof}

We now choose a triple $(u_1,u_2u_3u_4,F)$ for which $|F|$ is minimum among all triples in $G$. Let $H=G-\{u_1,u_2,u_3,u_4\}$, and let $S=N_H(F)$. Then $\kappa(H)=k$ and $F$ is an end of $H$ satisfying $N_G(u_2)\cap V(F)=\emptyset$ and $N_G(u_4)\cap V(F)\neq \emptyset$ by the definition of triple.

Note that every vertex of $\{u_1,u_2,u_3,u_4\}$ has at most two neighbors in the other three vertices; otherwise, $G[\{u_1,u_2,u_3,u_4\}]$ contains a copy of $K_{1,3}$ whose deletion preserves the required connectivity, a contradiction. We next analyze how the vertices $u_1,u_2,u_3,u_4$ are adjacent to $H$ and determine the resulting structure of $H$.
\begin{lemma}\label{u3F}
$N_G(u_3)\cap V(F)\neq \emptyset$.
\end{lemma}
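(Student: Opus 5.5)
We argue by contradiction against the minimality of $|F|$. Suppose $N_G(u_3)\cap V(F)=\emptyset$. Since $N_G(u_2)\cap V(F)=\emptyset$ as well, every vertex of $F$ has at most two neighbours in $\{u_1,u_2,u_3,u_4\}$, namely $u_1$ and $u_4$. Hence $d_H(x)\ge \lfloor 3k/2\rfloor+1$ for every $x\in V(F)$, and $|F|\ge 2$ by the usual degree count. Fix $z\in N_G(u_4)\cap V(F)$, which exists by the triple condition. The plan is to build a new triple on $\{u_2,u_3,u_4,z\}$, with $u_2$ as the isolated vertex and $u_3u_4z$ as the path, whose end lies strictly inside $F$.

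\textbf{Step 1: deleting $z$ from $H$ keeps connectivity $k$.} The obstacle to applying Lemma~\ref{mpath} is that only vertices of $F$ are known to have large degree in $H$. To fix this, add a large clique $Q$ of new vertices, each joined to every vertex of $V(H)\setminus V(F)$, obtaining $H'$. By Lemma~\ref{lem:add-vertex-outside-end}, $\kappa(H')=k$ and $F$ is still an end of $H'$. Moreover $\delta(H')\ge \lfloor 3k/2\rfloor$, so Lemma~\ref{mpath} with $m=1$ gives $\kappa(H'-z)\ge k$.

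To pass back to $H-z$, suppose $X$ with $|X|<k$ separates $H-z$ into parts $A$ and $B$. Each of $A$ and $B$ must meet $V(H)\setminus(V(F)\cup X)$; otherwise $Q$ would not reconnect them in $H'-z-X$. Then $X\cup\{z\}$, minus suitable vertices, would give a cut of $H$ inconsistent with $F$ being an end of $H$ with $N_H(F)=S$. I expect this to go through with a short fragment argument; if it does not, I would replace $Q$ by edges added only between $S$-type vertices.

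Having $\kappa(H-z)\ge k$ and $z\in F$, $F-z\ne\emptyset$, Lemma~\ref{subend} applied to $H$ yields $\kappa(H-z)=k$ and an end $F'\subseteq F-z$ of $H-z$ with $N_G(z)\cap V(F')\neq\emptyset$.

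\textbf{Step 2: adding back $u_1$.} Let $H''=G-\{u_2,u_3,u_4,z\}=(H-z)+u_1$. Since $u_1$ has at most two neighbours among $u_2,u_3,u_4$, and at most one more at $z$, it has at least $k$ neighbours in $H-z$; therefore $\kappa(H'')\ge k$ by the first part of the proof of Lemma~\ref{lem:add-vertex-outside-end}.

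If $N_G(u_1)\cap V(F')=\emptyset$, then Lemma~\ref{lem:add-vertex-outside-end} shows that $F'$ is an end of $H''$ and $\kappa(H'')=k$. Now $(u_2,u_3u_4z,F')$ is a triple: the middle vertex of the path is $u_3$, which has no neighbour in $F'\subseteq F$, and the last vertex $z$ does have a neighbour in $F'$. Since $|F'|<|F|$, this contradicts minimality.

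\textbf{Main obstacle.} The remaining case is $N_G(u_1)\cap V(F')\ne\emptyset$. Here I would first check whether $F'$ is still a fragment of $H''$; in that case any end $F''\subseteq F'$ of $H''$ can be chosen meeting $N_G(z)$ via the argument of Lemma~\ref{subend}. Otherwise, I would exploit a neighbour $y\in N_G(u_1)\cap V(F')$. Using Step 1 inside $F'$, one can hope to delete such a $y$ together with $u_1$ and find a $K_{1,3}$ or a smaller triple, for instance $(u_4,\ldots)$ re-centred at $u_1$. Either outcome gives a contradiction, and this finishes the proof of Lemma~\ref{u3F}.
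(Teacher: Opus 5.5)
\textbf{There is a genuine gap, and it comes from your choice of which vertex to discard.} In a triple, only the first vertex of the path must miss $F$. The isolated vertex $u_1$ is completely unconstrained, so it may have neighbours in $F$, and even in every candidate end $F'\subseteq F$.

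By making $u_2$ the new isolated vertex you are forced to put $u_1$ back. Then $u_1\in N_{H''}(F')$, so $|N_{H''}(F')|\ge k+1$ and $F'$ is not a fragment of $H''$. Your first sub-case in the ``main obstacle'' paragraph is therefore vacuous. Nothing guarantees that $H''=G-\{u_2,u_3,u_4,z\}$ has connectivity $k$ with an end inside $F$, and the second sub-case is only a hope.

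Step 1 has a related problem. Transferring $\kappa\ge k$ from the graph with the artificial clique $Q$ back to $H-z$ is not justified. Nothing in the hypotheses excludes a $k$-cutset of $H$ through $z$ whose small side lies outside $F$; compare Lemma \ref{lem:structure-of-k-cut}, where such cutsets, with a $K_{\lfloor k/2\rfloor}$ side, must be handled explicitly. Vertices outside $F$ may be adjacent to all of $u_1,\dots,u_4$, so $\delta(H)$ can be $\lfloor 3k/2\rfloor-1$ and Lemma \ref{mpath} really does not apply to $H$.

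\textbf{The missing idea is to swap the roles: keep $u_1$ as the isolated vertex and put $u_2$ back.}
\begin{itemize}
\item Let $H'=G[V(H)\cup\{u_2\}]=G-\{u_1,u_3,u_4\}$. The vertex $u_2$ has at least $\delta(G)-3\ge k$ neighbours in $H$ and none in $F$. Lemma \ref{lem:add-vertex-outside-end} then gives $\kappa(H')=k$ with $F$ still an end of $H'$.
\item Moreover $\delta(H')\ge \delta(G)-3\ge\lfloor 3k/2\rfloor$. So Lemma \ref{mpath} with $m=1$ gives $\kappa(H'-z)\ge k$ directly, with no auxiliary clique and no need to control $H-z$.
\item Since $|F|\ge 2$, Lemma \ref{subend} yields an end $F'\subsetneq F$ of $H'-z=G-\{u_1,u_3,u_4,z\}$ with $N_G(z)\cap V(F')\neq\emptyset$.
\item Also $N_G(u_3)\cap V(F')=\emptyset$ by your assumption. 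Hence $(u_1,u_3u_4z,F')$ is a triple with $|F'|<|F|$, contradicting minimality.
\end{itemize}
This is exactly the paper's argument.
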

\begin{proof}
Suppose, to the contrary, that $N_G(u_3)\cap V(F)=\emptyset$. Then $N_G(F)\cap\{u_2,u_3\}=\emptyset$ and thus $|F|\geq \delta (G[V(F)])+1\geq (\left\lfloor \frac{3k}{2}\right\rfloor+3-2-k)+1\geq 2$. Let $H'=G[V(H)\cup \{u_2\}]$. Since $|N_G(u_2)\cap V(H)|\geq \left\lfloor \frac{3k}{2}\right\rfloor+3-3\geq k$ and $(N_G(u_2)\cap V(H))\subseteq V(H)\setminus V(F)$, $S$ is still a $k$-cutset of $H'$ and $F$ is an end of $H'$ to $S$ by Lemma \ref{lem:add-vertex-outside-end}.

Recall that $N_G(u_4)\cap V(F)\neq \emptyset$. Take $u_5\in N_G(u_4)\cap V(F)$. Since $\delta(H')\geq \delta(G)-3\geq \left\lfloor \frac{3k}{2}\right\rfloor$, $\kappa(H'-\{u_5\})\geq k$ by Lemma \ref{mpath}. Moreover, since $|V(F)\setminus \{u_5\}|\geq 2-1>0$, $\kappa(H'-\{u_5\})=k$ and there exists an end $F'\subseteq F$ with $N_G(u_5)\cap V(F')\neq \emptyset$ by Lemma \ref{subend}. Since $N_G(u_3)\cap V(F')=\emptyset$ since $F'\subseteq F$ and $N_G(u_3)\cap V(F)=\emptyset$, $(u_1,u_3u_4u_5,F')$ is a triple in $G$ satisfying $|F'|<|F|$, contradicting the choice of $(u_1,u_2u_3u_4,F)$.

The proof of Lemma \ref{u3F} is complete.
\end{proof}
For every $u\in N_G(u_3)\cap V(F)$, the graph $U[\{u_2,u_3,u_4,u\}]$ contains a copy of $K_{1,3}$. Thus $H-\{u\}$ is not $k$-connected; otherwise, adjoining $u_1$, which has at least $\delta(G)-3\geq k$ neighbors in $H-\{u\}$, gives a contradiction. Since $H$ is $k$-connected while $H-\{u\}$ is not, there exists a cut set $R$ of $H-\{u\}$ with $|R|\leq k-1$. Then $R\cup \{u\}$ is a $k$-cutset of $H$. Since $u\in V(F)\subseteq V(H)\setminus S$, we have $R\cup \{u\}\neq S$.
\begin{lemma}\label{lem:structure-of-k-cut}
Let $u\in N_G(u_3)\cap V(F)$, and let $S'$ be a $k$-cutset of $H$ containing $u$. Then $H-S'$ has exactly two components. Moreover, one component is a clique of order $\lfloor \frac{k}{2}\rfloor$, every vertex of which is adjacent to each of $u_1,u_2,u_3,u_4$ in $G$.
\end{lemma}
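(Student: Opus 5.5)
The plan is to combine a degree count inside a component of $H-S'$ with the standard crossing argument between the $k$-cuts $S$ and $S'$ of $H$, using that $F$ is an end of $H$.

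\emph{Lower bound on component sizes.} Let $D$ be any component of $H-S'$ and $x\in V(D)$. In $G$, every neighbor of $x$ lies in $V(D)\cup S'\cup\{u_1,u_2,u_3,u_4\}$. Hence
\[
\left\lfloor\tfrac{3k}{2}\right\rfloor+3\le d_G(x)\le (|D|-1)+k+4 ,
\]
so $|D|\ge\lfloor k/2\rfloor$. Moreover, if $|D|=\lfloor k/2\rfloor$, every inequality is tight. Then $D$ is a clique, $x$ is adjacent to all of $S'$, and $x$ is adjacent to each of $u_1,u_2,u_3,u_4$. So it suffices to prove two things: $H-S'$ has exactly two components, and one of them has at most $\lfloor k/2\rfloor$ vertices.

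\emph{Crossing with $S$.} Write $\bar F=H-(S\cup V(F))$, which is nonempty since $S$ is a cutset of $H$. For a union $A$ of components of $H-S'$, let $\bar A=H-(S'\cup A)$. Since $u\in S'\cap V(F)$, every corner $F\cap A$ and $F\cap\bar A$ is a \emph{proper} subset of $F$. Because $F$ is an end of $H$, such a corner, if nonempty, satisfies $|N_H(\cdot)|\ge k+1$. The neighborhoods of opposite corners satisfy the usual inequality
\[
|N_H(F\cap\bar A)|+|N_H(\bar F\cap A)|\le |S|+|S'|=2k .
\]
I would proceed in three steps.
\begin{enumerate}[label=(\alph*)]
\item If both $F\cap A$ and $F\cap\bar A$ were nonempty, the two $F$-corners would each need neighborhood at least $k+1$. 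Adding the neighborhood bounds of the two $F$-corners gives at most $2k$, a contradiction. Hence for every split of the components of $H-S'$ into two groups, one side misses $F$.
\item Take $A$ to be the side missing $F$. If $\bar F\cap A\neq\emptyset$, then the opposite corner $F\cap\bar A$ is nonempty, because $F\setminus S'$ is nonempty (it has size at least $\lfloor k/2\rfloor\ge1$ by the degree count above, or is nonempty using $|F|\ge 2$). That corner has neighborhood at least $k+1$, forcing $|N_H(\bar F\cap A)|\le k-1$. This contradicts $\kappa(H)=k$, since $\bar F\cap\bar A$ or $F\cap\bar A$ remains on the other side. Thus $A\subseteq S\setminus S'$.
\item Applying (a) and (b) to each single component shows that all components except the unique one meeting $F$ lie inside $S\setminus S'$.
\end{enumerate}

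\emph{Counting, and the main obstacle.} The step I expect to be hardest is converting the containment $A\subseteq S\setminus S'$ into the sharp bound $|A|\le\lfloor k/2\rfloor$, and also excluding three or more components. For this I would use the strict inequality $|N_H(F\cap\bar A)|\ge k+1$ in the refined form
\[
|S\cap S'|+|S'\cap F|+|S\cap\bar A|\ge k+1 .
\]
I would combine it with $|S\cap S'|+|S\cap A|+|S\cap\bar A|=k$, which gives $|A|\le|S\cap A|\le|S'\cap F|-1$. I would then run the symmetric count using the component containing $\bar F\setminus S'$ (if nonempty) to bound $|S'\cap F|$ from the other side. If this is not tight enough, I would bring in the vertices $u_1,\dots,u_4$: each vertex of $A$ must see all of $u_1,\dots,u_4$ unless $|A|>\lfloor k/2\rfloor$. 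Together with the minimality of $|F|$ over triples, this should force a contradiction, because it would yield a $K_{1,3}$ or a smaller triple.

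Once every component other than the main one has size exactly $\lfloor k/2\rfloor$, two such components would give $2\lfloor k/2\rfloor\le|S\setminus S'|\le k-|S\cap S'|$. I expect the counting above to rule this out, which yields exactly two components. The tight case of the degree inequality then gives the clique structure and the adjacency to $u_1,u_2,u_3,u_4$.
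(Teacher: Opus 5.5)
\textbf{There is a genuine gap.} Your opening reduction is correct and neat. Every component of $H-S'$ has at least $\lfloor k/2\rfloor$ vertices, and equality forces exactly the clique and adjacency structure. So it suffices to show there are two components and that one has at most $\lfloor k/2\rfloor$ vertices.

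\textbf{Main gap: step (b) and the case $F\subseteq S'$.} In step (b) you assert $F\setminus S'\neq\emptyset$, but neither reason works. The degree count bounds components of $H-S'$, not $F\setminus S'$, and $|F|\ge 2$ does not prevent $F\subseteq S'$. The case $F\subseteq S'$ is the paper's Case 2 ($A_1=A_7=\emptyset$) and cannot be discarded. There your picture (a unique component meeting $F$, the rest inside $S\setminus S'$) is empty.

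This case needs the one hypothesis you never use: $N_G(u_2)\cap V(F)=\emptyset$, together with $u_2$ having at most two neighbours among $u_1,u_3,u_4$. It gives $|F|\ge\lfloor k/2\rfloor+1$. It also excludes $\bar F\subseteq S'$: otherwise $|\bar F|=\lfloor k/2\rfloor$ and $\bar F$ is a clique joined to $u_2$, so $d_G(u_2)\le 2+k+\lfloor k/2\rfloor$.

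Then run the crossing argument with $\bar F$ in place of $F$, using $\kappa(H)=k$ instead of the end property. We have $|\bar F\cap S'|+|S\cap S'|\le\lceil k/2\rceil-1$. For any component $D$ meeting $\bar F$, the set $(\bar F\cap S')\cup(S\cap S')\cup(S\cap D)$ separates $\bar F\cap D$ from $F$, so $|S\cap D|\ge\lfloor k/2\rfloor+1$. Hence exactly one component meets $\bar F$. The others lie in $S\setminus S'$ and have total order at most $\lceil k/2\rceil-1$. Without the $u_2$ hypothesis, the configuration $F\subseteq S'$ with $|F|=\lfloor k/2\rfloor$ survives all of your counting.

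\textbf{Step (a): wrong inequality.} As stated it is false. The bound $|S|+|S'|=2k$ holds for \emph{opposite} corners. For the two $F$-corners $F\cap A$ and $F\cap\bar A$, the bounding sets have total size $|S|+|S'|+|F\cap S'|-|\bar F\cap S'|$, which can exceed $2k$. To fix it, first use opposite corners to get $\bar F\cap A=\bar F\cap\bar A=\emptyset$, i.e.\ $\bar F\subseteq S'$. Your degree count on $\bar F$ then gives $|\bar F|\ge\lfloor k/2\rfloor$, so $|F\cap S'|-|\bar F\cap S'|\le 1$. The total is then at most $2k+1<2(k+1)$; this is the paper's Case 1.

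\textbf{Final counting: left open.} You leave the step you call the main obstacle unresolved, and the appeal to minimality of triples or a new $K_{1,3}$ is not what is needed. When $F\not\subseteq S'$ it closes by counting alone. Let $A$ be the union of the components missing $F$, and suppose $|A|\ge\lfloor k/2\rfloor+1$. Then $|F\cap S'|\ge|A|+1$, and one of two things happens:
\begin{itemize}
\item $\bar F\cap\bar A\neq\emptyset$, and $\kappa(H)=k$ forces $|\bar F\cap S'|\ge|A|$;
\item $\bar F\subseteq S'$, and the degree count gives $|\bar F|\ge\lfloor k/2\rfloor$.
\end{itemize}
In both cases $|S'|>k$. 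Hence $|A|\le\lfloor k/2\rfloor$, so $A$ is a single component of order exactly $\lfloor k/2\rfloor$. This also yields exactly two components.
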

\begin{proof}
Let $F'$ be a fragment of $H$ to $S'$. Let $\bar{F'}=H-(S'\cup V(F'))$, and let $\bar{F}=H-(S\cup V(F))$. We use the following notation for the nine intersections:
\begin{align*}
A_1=F\cap F', A_2=S\cap F',A_3=\bar F\cap F',\\
A_4=F\cap S',A_5=S\cap S',A_6=\bar F\cap S',\\
A_7=F\cap \bar{F'},A_8=S\cap \bar{F'},A_9=\bar F\cap \bar{F'}.
\end{align*}
\begin{claim}\label{k+1}
If $A_1\neq\emptyset$, then $|A_2\cup A_4\cup A_5|\geq k+1$. Similarly, if $A_7\neq\emptyset$, then $|A_4\cup A_5\cup A_8|\geq k+1$.
\end{claim}
\begin{proof}
Suppose that $A_1\neq\emptyset$. Since $V(\bar F)\neq\emptyset$, $A_2\cup A_4\cup A_5$ is a vertex cut of $H$. Hence, as $\kappa(H)=k$, $|A_2\cup A_4\cup A_5|\geq k$. If equality holds, then $A_1$ is a fragment of $H$ to $A_2\cup A_4\cup A_5$, and thus contains an end $F_1$ of $H$. Since $u\in A_4$, $A_1\subsetneq F$ and thus $F_1\subseteq A_1\subsetneq F$, this contradicts the choice of $F$ as an end. Therefore, $|A_2\cup A_4\cup A_5|\geq k+1$.

The second assertion follows similarly.
\end{proof}
\begin{claim}\label{k+1empty}
If $|A_2\cup A_4\cup A_5|\ge k+1$, then $A_9=\emptyset$. Similarly, if $|A_4\cup A_5\cup A_8|\ge k+1$, then $A_3=\emptyset$.
\end{claim}
\begin{proof}
Suppose, to the contrary, that $A_9\neq\emptyset$. Then $A_5\cup A_6\cup A_8$ is a vertex cut of $H$, and hence
$|A_5\cup A_6\cup A_8|\geq k$. Together with the above inequality, we obtain $|S|+|S'|=(|A_2\cup A_4\cup A_5|)+(|A_5\cup A_6\cup A_8|)>2k$, a contradiction. Therefore, $A_9=\emptyset$.

The other statement follows symmetrically.
\end{proof}
\begin{claim}\label{A3A9empty}
If $A_3=A_9=\emptyset$, then one of the following holds:
\begin{itemize}
    \item[(i)] $H[A_6]\cong K_{\left\lfloor \frac{k}{2}\right\rfloor}$ and $\{u_1,u_2,u_3,u_4\}\subseteq N_G(u)$ for every $u\in V(A_6)$,
    \item[(ii)] $|A_6|>|A_4|$.
\end{itemize}
\end{claim}
\begin{proof}
Since $A_3=A_9=\emptyset$, we have $\bar {F}=A_6\neq\emptyset$ and $N_G(u)\subseteq \{u_1,u_2,u_3,u_4\}\cup S\cup V(A_6)$
for every $u\in V(A_6)$. Hence $|A_6|\geq \delta(H[A_6])+1\geq \delta(G)-4-k+1\geq \left\lfloor\frac{3k}{2}\right\rfloor+3-4-k+1\geq \left\lfloor\frac{k}{2}\right\rfloor$. If $|A_6|\geq \left\lfloor\frac{k}{2}\right\rfloor+1$, since $|A_4\cup A_5\cup A_6|=k$, we obtain $|A_4|\leq k-|A_6|\leq k-(\left\lfloor\frac{k}{2}\right\rfloor+1)\leq \left\lfloor\frac{k}{2}\right\rfloor<|A_6|$, and thus $(ii)$ holds.

It remains to consider $|A_6|=\left\lfloor\frac{k}{2}\right\rfloor$. For any $u\in V(A_6)$,
\[
|N_G(u)\cap\{u_1,u_2,u_3,u_4\}|+d_{H[A_6]}(u)
\geq d_G(u)-|S|
\geq \left\lfloor \frac{k}{2}\right\rfloor+3.
\]
Since the left-hand side is at most $4+\left(|A_6|-1\right)=\left\lfloor \frac{k}{2}\right\rfloor+3$, equality holds throughout. Hence every vertex of $A_6$ is adjacent to each of $u_1,u_2,u_3,u_4$, and $d_{H[A_6]}(u)=\left\lfloor\frac{k}{2}\right\rfloor-1$. Therefore, $H[A_6]\cong K_{\left\lfloor\frac{k}{2}\right\rfloor}$ and $(i)$ holds.
\end{proof}
The proof of the following claim is analogous to that of Claim \ref{A3A9empty} and is therefore omitted.
\begin{claim}\label{A7A9empty}
If $A_7=A_9=\emptyset$, then one of the following holds:
\begin{itemize}
    \item[(i)] $H[A_8]\cong K_{\left\lfloor \frac{k}{2}\right\rfloor}$ and $\{u_1,u_2,u_3,u_4\}\subseteq N_G(u)$ for every $u\in V(A_8)$,
    \item[(ii)] $|A_8|>|A_2|$.
\end{itemize}
\end{claim}
\begin{claim}\label{A3A9nbe}
If $A_3=A_9=\emptyset$, then $|A_6|>|A_4|$.
\end{claim}
\begin{proof}
Suppose that $A_3=A_9=\emptyset$. By Claim \ref{A3A9empty}, if $(i)$ holds, then $\bar {F}=A_6$ and $H[A_6]\cong K_{\left\lfloor \frac{k}{2}\right\rfloor}$.
Since $N_G(u_2)\cap V(F)=\emptyset$ and $u_2$ has at most two neighbors in $\{u_1,u_3,u_4\}$, we have $d_G(u_2)\leq 2+|S|+|A_6|=\left\lfloor\frac{3k}{2}\right\rfloor+2<\delta(G)$, a contradiction. Hence $|A_6|>|A_4|$ holds.
\end{proof}

We divide the remainder of the proof into the following three cases.

{\bf Case 1.} $A_1\neq\emptyset$ and $A_7\neq\emptyset$.

Since $A_1\neq\emptyset$, Claim \ref{k+1} gives $|A_2\cup A_4\cup A_5|\geq k+1$, and hence $A_9=\emptyset$ by Claim \ref{k+1empty}. Similarly, $A_7\neq\emptyset$ implies $|A_4\cup A_5\cup A_8|\geq k+1$ and thus $A_3=\emptyset$.

Therefore, $A_3=A_9=\emptyset$, and Claim \ref{A3A9nbe} gives $|A_6|>|A_4|$. It follows that $|A_5\cup A_6\cup A_8|>|A_4\cup A_5\cup A_8|\geq k+1$. Consequently, $|S|+|S'|=|A_2\cup A_4\cup A_5|+|A_5\cup A_6\cup A_8|>2k$, a contradiction.

{\bf Case 2.} $A_1=A_7=\emptyset$.

Since $N_G(u_2)\cap V(F)=\emptyset$, $|N_G(A_4)\cap \{u_1,u_2,u_3,u_4\}|\leq 3$. Combined with $A_1=A_7=\emptyset$, $|A_4|\geq \delta(H[A_4])+1\geq \delta(G)-3-k+1\geq \left\lfloor \frac{k}{2}\right\rfloor+1$. Then $|A_6|\leq |S'|-|A_4|\leq k-(\left\lfloor \frac{k}{2}\right\rfloor+1)\leq \left\lfloor \frac{k}{2}\right\rfloor<|A_4|$, and thus $A_3$ and $A_9$ are not both empty by Claim \ref{A3A9nbe}.

By symmetry of $A_3$ and $A_9$, we may assume $A_3\neq \emptyset$, which implies that $A_2\cup A_5\cup A_6$ is a cut set of $H$. Hence, $|A_2\cup A_5\cup A_6|\geq k$. Since $|A_4|>|A_6|$, we have $|A_2\cup A_4\cup A_5|>|A_2\cup A_5\cup A_6|\geq k$, and thus $A_9=\emptyset$ by Claim \ref{k+1empty}.

Now $A_7=A_9=\emptyset$. Suppose that $|A_8|>|A_2|$. Then $|A_5\cup A_6\cup A_8|>|A_2\cup A_5\cup A_6|\geq k$. Recall that $|A_2\cup A_4\cup A_5|>k$. Thus, $|S|+|S'|=|A_2\cup A_4\cup A_5|+|A_5\cup A_6\cup A_8|>2k$, a contradiction. Therefore, $\bar{F'}=H[A_8]\cong K_{\left\lfloor\frac{k}{2}\right\rfloor}$ and every vertex of $A_8$ is adjacent in $G$ to each of $u_1,u_2,u_3,u_4$ by Claim \ref{A7A9empty}.

Since $|A_4|\geq \left\lfloor \frac{k}{2} \right\rfloor+1$ and $|A_8|=\left\lfloor \frac{k}{2} \right\rfloor$, we have $|A_4\cup A_5\cup A_8|\geq k$. Since $2k=|S|+|S'|=|A_4\cup A_5\cup A_8|+|A_2\cup A_5\cup A_6|$, it follows that $|A_2\cup A_5\cup A_6|\leq k$. Together with $|A_2\cup A_5\cup A_6|\geq k$, this yields $|A_2\cup A_5\cup A_6|=k$. Moreover, since $|A_4\cup A_5\cup A_6|=k$, we have $|A_2|=|A_4|>0$. Thus $A_2\cup A_5\cup A_6$ is a minimum vertex cut of $H$, and hence every component of $H[A_3]$ is adjacent to every vertex of $A_2$. Consequently, $F'=H[A_1\cup A_2\cup A_3]=H[A_2\cup A_3]$ is connected. Hence, $H-S'$ has exactly the two components $F'$ and $\bar {F'}$, and the lemma follows.

{\bf Case 3.} Exactly one of $A_1$ and $A_7$ is nonempty.

By the symmetry of $A_1$ and $A_7$, we may assume, without loss of generality, that $A_1\neq\emptyset$ and $A_7=\emptyset$. By Claims \ref{k+1} and \ref{k+1empty}, we obtain $|A_2\cup A_4\cup A_5|\geq k+1$ and $A_9=\emptyset$. Now $A_7=A_9=\emptyset$.

We first show that $|A_4\cup A_5\cup A_8|=k$. Since $A_7=A_9=\emptyset$, by Claim \ref{A7A9empty}, one of the following alternatives holds: $|A_8|>|A_2|$, or $|A_8|=\left\lfloor\frac{k}{2}\right\rfloor$ and $|A_2|=|S|-|A_5|-|A_8|\leq \left\lfloor\frac{k}{2}\right\rfloor+1$. In either case, we have $|A_8|\geq |A_2|-1$. Since $|A_2\cup A_4\cup A_5|\geq k+1$, it follows that $|A_4\cup A_5\cup A_8|\geq |A_2\cup A_4\cup A_5|-1\geq k$. Suppose that $|A_4\cup A_5\cup A_8|>k$. Then $A_3=\emptyset$ by Claim \ref{k+1empty}. Hence, $A_3=A_9=\emptyset$ and thus $|A_6|>|A_4|$ by Claim \ref{A3A9nbe}. Then we have $|A_2\cup A_5\cup A_6|> |A_2\cup A_5\cup A_4|\geq k+1$. Together with $|A_4\cup A_5\cup A_8|>k$, this yields $|S|+|S'|=|A_4\cup A_5\cup A_8|+|A_2\cup A_5\cup A_6|>2k$, a contradiction. Therefore, $|A_4\cup A_5\cup A_8|=k$.

Since $|A_2\cup A_4\cup A_5|\geq k+1$ and $|A_4\cup A_5\cup A_8|=k$, we have $|A_2|>|A_8|$. Hence, by Claim \ref{A7A9empty}, $H[A_8]\cong K_{\left\lfloor\frac{k}{2}\right\rfloor}$ and every vertex of $A_8$ is adjacent to each of $u_1,u_2,u_3,u_4$ in $G$. Moreover, since $|A_2\cup A_5\cup A_8|=k$, $|A_2|>|A_8|$ and $|A_8|=\left\lfloor\frac{k}{2}\right\rfloor$, we obtain $|A_2|=\left\lfloor\frac{k}{2}\right\rfloor+1$ and $|A_5|=0$. In particular, $k$ is odd.

Since $|A_4\cup A_5\cup A_8|=k$, together with $|A_8|=\left\lfloor\frac{k}{2}\right\rfloor$ and $|A_5|=0$, we obtain $|A_4|=\left\lfloor\frac{k}{2}\right\rfloor+1$. Consequently, $|A_2\cup A_4\cup A_5|=k+1$. Let $C$ be a component of $H[A_1]$. If some vertex of $A_2$ has no neighbor in $C$, then $|N_H(C)|\le |A_2\cup A_4\cup A_5|-1=k$. Since $H$ is $k$-connected and $C\subsetneq F$, $C$ is a fragment of $H$ properly contained in the end $F$, a contradiction.
Thus every component of $H[A_1]$ has a neighbor at each
vertex of $A_2$. On the other hand, since $|A_4\cup A_5\cup A_8|+|A_2\cup A_5\cup A_6|=|S|+|S'|=2k$, we obtain $|A_2\cup A_5\cup A_6|=k$. If $A_3\neq\emptyset$, then $A_2\cup A_5\cup A_6$ is a $k$-cut of $H$, and hence every component of $H[A_3]$ has a neighbor in $A_2$. Therefore, $F'=H[A_1\cup A_2\cup A_3]$ is connected. Since
$\bar{F'}=H[A_8]\cong K_{\left\lfloor \frac{k}{2}\right\rfloor}$, the graph $H-S'$ has exactly the two components $F'$ and $\bar{F'}$, and the lemma follows.
\end{proof}

We are now ready to establish the $K_{1,3}$ case of Theorem \ref{1tm-t-2}. Choose a vertex $u\in N_G(u_3)\cap V(F)$, which exists by Lemma \ref{u3F}. Since $u_2u_3,u_3u_4,u_3u\in E(G)$, the graph $G[\{u_2,u_3,u_4,u\}]$ contains a copy of $K_{1,3}$ with center $u_3$. By the assumption that $U$ contains no subgraph $T\cong K_{1,3}$ such that $\kappa(G-V(T))\geq k$, we have $\kappa(G-\{u_2,u_3,u_4,u\})<k$.
Let $H'=G[(\{u_1\}\cup V(H))-\{u\}]=G-\{u_2,u_3,u_4,u\}$.
Then $\kappa(H')<k$. Since $\kappa(H)=k$, the decrease of the connectivity is caused by deleting the vertex $u$. Hence there exists a $k$-cutset $S'$ of $H$ containing $u$ such that some component of $H-S'$ has no neighbor to $u_1$.

By Lemma \ref{lem:structure-of-k-cut}, $H-S'$ has exactly two components $C_1$ and $C_2$, where $C_1\cong K_{\left\lfloor \frac{k}{2} \right\rfloor}$ and every vertex of $C_1$ is adjacent to each vertex of $u_1,u_2,u_3,u_4$ in $G$. Moreover, $C_2$ contains no neighbor of $u_1$. Therefore, $|N_G(u_1)\cap V(H)|\leq |S'|+|C_1|=k+\left\lfloor \frac{k}{2}\right\rfloor=\left\lfloor\frac{3k}{2}\right\rfloor$.

Since $u_1$ has at most two neighbors in $\{u_2,u_3,u_4\}$, we have $d_G(u_1)\leq |N_G(u_1)\cap V(H)|+2\leq \left\lfloor\frac{3k}{2}\right\rfloor+2 <\left\lfloor\frac{3k}{2}\right\rfloor+3$, contradicting the assumption that $\delta(G)\geq \left\lfloor\frac{3k}{2}\right\rfloor+3$.

This proves Theorem \ref{1tm-t-2} for $K_{1,3}$.
\hfill$\qed$

\subsection{Completion of the proof}\label{genfull}
\begin{proof}
Suppose, to the contrary, that Theorem \ref{1tm-t-2} is false, and let $P_s^+(i)$ be a counterexample of minimum order. If $i=1$ or $i=s-1$, then $P_s^+(i)$ is a path. The required conclusion follows from Theorem \ref{mader1} when $\kappa(G)>k$, and from Lemma \ref{mpath} when $\kappa(G)=k$. Hence $2\leq i\leq s-2$.

Let $G$ be a corresponding $k$-connected graph with $\delta(G)\geq \left\lfloor\frac{3k}{2}\right\rfloor+s-1$. If $\kappa(G)=k$, let $F^*$ be an end of $G$ containing no subgraph $H\cong P_s^+(i)$ with $\kappa(G-V(H))\geq k$, and let $U=F^*$. Otherwise, let $U=G$. By the minimality of $s$,
together with Theorem \ref{mader1} and Lemma \ref{mpath}, the conclusion holds for every pendant extension of a path of order less than $s$.

Let $c$ be the vertex of degree three in $P_s^+(i)$, and let $L_j=v_j^1\cdots v_j^{l_j}$ for $j\in[3]$ be the three components of $P_s^+(i)-c$, where $v_j^1$ is adjacent to $c$ and $v_j^0=c$. Without loss of generality, assume that $l_1\geq l_2\geq l_3=1$. Thus $l_1+l_2+l_3=s-1$. By Subsection \ref{K13}, we may assume that $s\geq5$, and hence $l_1\geq2$.

By the minimality of $s$, $U$ contains a subgraph $T\cong P_s^+(i)-v_1^{l_1}$ such that $\kappa(G-V(T))\geq k$. Let $\phi$ be the corresponding
isomorphism, and let $\phi(c)=c'$ and $\phi(v_j^r)=u_j^r$. Write $L_1'=u_1^1\cdots u_1^{l_1-1}$, $L_2'=u_2^1\cdots u_2^{l_2}$, and $L_3'=u_3^1$. Let $u_1^0=c'$ and $G_1=G-V(T)$.

We have $\kappa(G_1)=k$. Indeed, if $\kappa(G)>k$ and $\kappa(G_1)>k$, then a neighbor of $u_1^{l_1-1}$ outside $V(T)$ extends $T$ to the required copy of $P_s^+(i)$. If $\kappa(G)=k$, then $|F^*|\geq\delta(G)-k+1>|T|$, so $N_G(F^*)$ remains a $k$-cutset of $G_1$ and thus $\kappa(G_1)=k$.

Choose an end $F$ of $G_1$ such that $V(F)\subseteq V(U)$ and $N_G(T)\cap V(F)\neq\emptyset$. When $\kappa(G)=k$, such an end exists by Lemma \ref{subend}. When $\kappa(G)>k$, any end $F'$ of $G_1$ has the required property, since otherwise $N_G(F')$ would be a $k$-cutset of $G$.

Since every vertex of $T$ has at least $\delta(G)-(|T|-1)\geq\left\lfloor\frac{3k}{2}\right\rfloor+1\geq k$ neighbors in $G_1$, it follows from Lemma \ref{lem:add-vertex-outside-end} that, for any $X\subseteq V(T)$ with $N_G(X)\cap V(F)=\emptyset$, we have $\kappa(G[V(G_1)\cup X])=k$, and $F$ remains an end of $G[V(G_1)\cup X]$.

To derive a contradiction, we distinguish three cases based on the adjacency relations between $T$ and $F$.
Before proceeding to the case analysis, we first establish the following claim, which will be a useful tool in our subsequent arguments.
\begin{claim}\label{gl_1}
For any $j\in[l_1]$, if $U$ contains a subgraph $T'\cong P_s^+(i)-V(L_1[v_1^j,v_1^{l_1}])$ with an isomorphism $\tau: P_s^+(i)-V(L_1[v_1^j,v_1^{l_1}])\to T'$ such that $\kappa(G-V(T'))=k$ and $N_G(\tau(v_1^{j-1}))\cap V(F_j)\neq \emptyset$, where $F_j$ is an end of $G-V(T')$ with $V(F_j)\subseteq V(U)$, then $U$ contains a subgraph $H\cong P_s^+(i)$ such that $\kappa(G-V(H))\geq k$.
\end{claim}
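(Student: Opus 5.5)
Given $T'$, $\tau$ and the end $F_j$ of $G_1'=G-V(T')$, we complete the missing part of leg $L_1$ by a path inside $F_j$ using Lemma \ref{mpath}.

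\textbf{Setup.} Set $w=\tau(v_1^{j-1})$; when $j=1$ this is $\tau(c)$. The number of missing vertices is $r=l_1-j+1\ge 1$, so $|T'|=s-r$. Let $G_1'=G-V(T')$. Since deleting $|T'|$ vertices lowers degrees by at most $|T'|$, we get
\[
\delta(G_1')\ge \left\lfloor\tfrac{3k}{2}\right\rfloor+s-1-(s-r)=\left\lfloor\tfrac{3k}{2}\right\rfloor+r-1 .
\]
By hypothesis $\kappa(G_1')=k$ and $F_j$ is an end of $G_1'$.

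\textbf{Key step.} Choose $x\in N_G(w)\cap V(F_j)$. Applying Lemma \ref{mpath} to $G_1'$, the end $F_j$ and the start vertex $x$, we obtain a path $P\subseteq F_j$ of order $r$ starting at $x$ with $\kappa(G_1'-V(P))\ge k$.

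\textbf{Assembly.} Let $H=T'\cup\{wx\}\cup P$. Since $V(P)\subseteq V(F_j)\subseteq V(G_1')$, the path $P$ is disjoint from $T'$. The tree $H$ is obtained from $T'$ by hanging a path of $r$ new vertices at $w$, which is the image of $v_1^{j-1}$. Because $v_1^{j-1}$ is exactly where the segment $L_1[v_1^j,v_1^{l_1}]$ was removed, extending $\tau$ by $v_1^{j-1+t}\mapsto$ the $t$-th vertex of $P$ gives an isomorphism $P_s^+(i)\to H$.

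Moreover $V(H)\subseteq V(U)$, since $V(T')\subseteq V(U)$ and $V(F_j)\subseteq V(U)$. Finally,
\[
G-V(H)=G_1'-V(P),
\]
which has connectivity at least $k$. So $H$ is the required subgraph.

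The only point to check is that Lemma \ref{mpath} applies: its hypotheses are $\kappa(G_1')=k$, that $F_j$ is an end of $G_1'$, and the degree bound above, all of which hold. The case $r=1$ (a single vertex $x$) is covered by the same lemma. I expect no real obstacle here: the claim is a direct packaging of Lemma \ref{mpath}, and the main care needed is the bookkeeping of indices and the isomorphism extension.
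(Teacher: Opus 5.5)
Your proposal is correct and is essentially the paper's proof: you pick $x\in N_G(\tau(v_1^{j-1}))\cap V(F_j)$, bound $\delta(G-V(T'))\ge\left\lfloor\frac{3k}{2}\right\rfloor+(l_1-j+1)-1$, and apply Lemma~\ref{mpath} in the end $F_j$ to get a path of order $l_1-j+1$ starting at $x$. Attaching that path to $T'$ by the edge $\tau(v_1^{j-1})x$ gives $H$, exactly as in the paper.
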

\begin{proof}
Choose $x\in N_G(\tau(v_1^{j-1}))\cap V(F_j)$. Since $|T'|=s-(l_1-j+1)$, we have $\delta(G-V(T'))\geq\delta(G)-|T'|\geq \left\lfloor\frac{3k}{2}\right\rfloor+(l_1-j+1)-1$. Since $\kappa(G-V(T'))=k$ and $F_j$ is an end of $G-V(T')$, there exists a path $P_x$ of order $l_1-j+1$ starting from $x$ in $F_j$ such that $\kappa(G-(V(T')\cup V(P_x)))\geq k$ by Lemma \ref{mpath}. Joining $P_x$ to $T'$ by the edge $\tau(v_1^{j-1})x$ gives the required subgraph $H$.
\end{proof}
We now proceed with the case analysis.

{\bf Case 1.} $N_G(c'L_1')\cap V(F)\neq\emptyset$.

Let $j$ be the largest index such that $N_G(u_j^1)\cap V(F)\neq\emptyset$, where $0\leq j\leq l_1-1$. Let $X=\{u^r_1:j+1\le r\le l_1-1\}$ and $T'=T-X$. By the choice of $j$, $N_G(X)\cap V(F)=\emptyset$. Thus $\kappa(G-V(T'))=k$ and $F$ remains an end of $G-V(T')$. Applying Claim \ref{gl_1} with $j+1$ gives a contradiction.

{\bf Case 2.} $N_G(u_3^1)\cap V(F)\neq\emptyset$.

By Case 1, $N_G(c'L_1')\cap V(F)=\emptyset$. Let $T'=T-\{u_1^r:2\leq r\leq l_1-1\}$. Then $\kappa(G-V(T'))=k$ and $F$ remains an end of $G-V(T')$. Interchanging the roles of $u_1^1$ and $u_3^1$ in $T'$, Claim \ref{gl_1} with $j=2$ gives a contradiction.

{\bf Case 3.} $N_G(L_2')\cap V(F)\neq\emptyset$.

Let $j$ be the largest index such that $N_G(u_2^j)\cap V(F)\neq\emptyset$. Then $L_2'[u_2^j,u_2^1]c'L_1'$ is a path of order $j+l_1\geq l_1+1\geq l_2+1$.

Suppose first that $j+l_1=l_2+1$. Then $l_1=l_2$ and $j=1$. Recall that $l_1\geq 2$. Then $l_2\geq 2$ and $u_2^2$ exists. Let $R=u_2^2u_2^1c'L_1'$.
Then $|R|=l_2+2$, and no vertex of $T-V(R)$ has a neighbor in $F$ by Cases 1, 2 and the choice of $j$. Thus $\kappa(G-V(R))=k$ and $F$ remains an end of $G-V(R)$. Viewing $R$ as a copy of $P_s^+(i)-V(L_1)$ with $c$ mapped to $u_2^1$, applying Claim \ref{gl_1} with $j=1$ gives a contradiction.

Hence $j+l_1\geq l_2+2$. Let $Q$ be the subpath of $L_2'[u_2^j,u_2^1]c'L_1'$ of order $l_2+2$ starting at $u_2^j$. Since $j\leq l_2$, the other endvertex of $Q$ lies in $L_1'$. By Cases 1 and 2 and the choice of $j$, $N_G(T-V(Q))\cap V(F)=\emptyset$. By Lemma \ref{lem:add-vertex-outside-end}, $\kappa(G-V(Q))=k$ and $F$ remains an end of $G-V(Q)$. Moreover, exactly one endvertex of $Q$ has a neighbor in $F$.

Choose a path $P=x_1\cdots x_{l_2+2}$ in $U$ and an end $F_P$ of $G-V(P)$ such that $\kappa(G-V(P))=k$, $V(F_P)\subseteq V(U)$, and exactly one endvertex of $P$ has a neighbor in $F_P$, with $|F_P|$ as small as possible. Without loss of generality, we assume $N_G(x_1)\cap V(F_P)=\emptyset$ and $N_G(x_{l_2+2})\cap V(F_P)\neq\emptyset$.

By Claim \ref{gl_1} with $j=1$, neither $x_2$ nor $x_{l_2+1}$ has a neighbor in $F_P$; otherwise, viewing $P$ as a copy of $P_s^+(i)-V(L_1)$ with either vertex corresponding to $c$ gives a contradiction.

Choose $y\in N_G(x_{l_2+2})\cap V(F_P)$. Since $\delta(G-V(P))\geq\left\lfloor\frac{3k}{2}\right\rfloor+l_1-1 \geq\left\lfloor\frac{3k}{2}\right\rfloor+1$, Lemma \ref{mpath} gives $\kappa(G-(V(P)\cup \{y\}))\geq k$. Since $|F_P|\geq\delta(G-V(P))-k+1\geq 2$, $\kappa(G-(V(P)\cup \{y\}))=k$ and $G-(V(P)\cup \{y\})$ has an end $F'\subseteq F_P\subseteq U$ such that $N_G(y)\cap V(F')\neq\emptyset$ by Lemma \ref{subend}.

Let $P'=x_2\cdots x_{l_2+2}y$. The vertex $x_1$ has no neighbor in $F'$ and has at least $\delta(G)-(l_2+2)\geq\left\lfloor\frac{3k}{2}\right\rfloor+l_1-1\geq k$ neighbors in $G-(V(P)\cup \{y\})$. Thus, by Lemma \ref{lem:add-vertex-outside-end}, adjoining $x_1$ to $G-(V(P)\cup \{y\})$ gives $\kappa(G-V(P'))=k$ and $F'$ remains an end of $G-V(P')$. Since $N_G(x_2)\cap V(F_P)=\emptyset$ and $F'\subseteq F_P$, exactly one endvertex of $P'$ has a neighbor in $F'$. This contradicts the choice of $(P,F_P)$, as $|F'|<|F_P|$.
\end{proof}
\section{Proof of Theorem \ref{triangle}}\label{trithm}
\subsection{The case of $K_{1,3}$}\label{tri4}
%For any non-path $T^*=(U_1,U_2)$ with $|U_1|\le |U_2|$, let $t$ be a pendant vertex of $T^*$ and $t'=N_T(t)$. Then $T^*-t$ is a path and $T^*-\{t,t'\}$ consists of two paths. Let $P_1,P_2$ be the two paths of $T^*-\{t,t'\}$. Then by the parity of $|T^*|$, $|P_1|$ and $|P_2|$, we have the following facts:\\
%$(i)$ If $|T^*|$ is odd, then $|P_1|$ and $|P_2|$ have different parity, $|U_1|=\frac{|T^*|-1}{2}$ and $|U_2|=\frac{|T^*|+1}{2}$;\\
%$(ii)$ If $|T^*|$ is even and $|P_1|$ and $|P_2|$ are both even, then $|U_1|=|U_2|=\frac{|T^*|}{2}$;\\
%$(iii)$ If $|T^*|$ is even and $|P_1|$ and $|P_2|$ are both odd, then  $|U_1|=\frac{|T^*|}{2}-1$ and $|U_2|=\frac{|T^*|}{2}+1$.
\begin{proof}
We first prove Theorem \ref{triangle} for $K_{1,3}$. Let $U$ be an end of $G$ if $\kappa(G)=k$, and let $U=G$ otherwise. Suppose, for contradiction, that $U$ contains no subgraph $T\cong K_{1,3}$ such that $\kappa(G-V(T))\geq k$. If $\kappa(G)\geq k+1$, since $\delta(G)\geq k+4> (k+1)+\frac{3+1}{2}$, there exists a path $u_1u_2u_3$ in $G$ such that $\kappa(G-V(u_1u_2u_3))\geq k+1$ by Theorem \ref{F1}. As $|N_G(u_2)\setminus \{u_1,u_3\}|\geq \delta(G)-2>0$, there exists a neighbor $u_2'$ of $u_2$ in $G$ distinct from $u_1$ and $u_3$. Clearly, $G[\{u_1,u_2,u_3,u_2'\}]$ contains a copy of $K_{1,3}$, and $\kappa(G-\{u_1,u_2,u_3,u_2'\})\ge k$, a contradiction. Hence, $\kappa(G)=k$ and now $U$ is an end of $G$.

We first show the following observation. Let $u\in V(U)$ and let $P=p_1p_2p_3$ be a path in $U-\{u\}$ such that the connectivity of $H=G-(V(P)\cup\{u\})$ is $k$ and $H$ contains an end $F\subseteq U$. Then $N_G(p_2)\cap V(F)=\emptyset$. Otherwise, choose $z\in N_G(p_2)\cap V(F)$. Since $G$ is triangle-free, every vertex of $H$ has at most three neighbors in $V(P)\cup\{u\}$. Hence, $\delta(H)\geq\delta(G)-3\geq k+1$ and thus $\kappa(H-\{z\})\geq k$ by Theorem \ref{F1}. Since $u$ has at least $\delta(G)-4\geq k$ neighbors in $H-\{z\}$, adjoining $u$ implies $\kappa(G-(V(P)\cup\{z\}))\geq k$. However, $G[V(P)\cup\{z\}]$ contains a copy of $K_{1,3}$ in $U$, a contradiction. Thus, this observation holds.

Since $\delta(G)\geq k+4$, by Theorem \ref{F1}, there is a path $Q\subseteq U$ of order $4$ such that $\kappa(G-V(Q))\geq k$. Since $|U|\geq \delta(G)-k+1\geq 5$, $V(U-V(Q))\neq \emptyset$ and thus $\kappa(G-V(Q))=k$ and there exists an end $F\subseteq U$ of $G-V(Q)$ such that $N_G(Q)\cap V(F)\neq \emptyset$ by Lemma \ref{subend}. Let $Q=q_1q_2q_3q_4$. Applying the observation to $(q_4,q_1q_2q_3)$ and $(q_1,q_2q_3q_4)$, we obtain $N_G(\{q_2,q_3\})\cap V(F)=\emptyset$. Recall that $N_G(Q)\cap V(F)\neq \emptyset$. Without loss of generality, we assume $N_G(q_4)\cap V(F)\neq \emptyset$.

Call a pair $(u,P_u)$ admissible if $u\in V(U)$, $P_u$ is a path of order three in $U-\{u\}$, and $\kappa(G-(V(P_u)\cup\{u\}))=k$. Moreover, an admissible pair $(u,P_u)$ is said to be witnessed by an end $F_u$ if it satisfies the conditions below: $F_u$ is an end of $G-(V(P_u)\cup\{u\})$ with $V(F_u)\subseteq V(U)$ and $N_G(F_u)\cap V(P_u)$ consists of exactly one endvertex of $P_u$. The pair $(q_1,q_2q_3q_4)$ is admissible and is witnessed by the end $F$.

Among all such pairs, choose $(v_1,v_2v_3v_4)$  and $F_{v_1}$ with $|F_{v_1}|$ as small as possible. Without loss of generality, we assume $N_G(v_2)\cap V(F_{v_1})=\emptyset$ and $N_G(v_4)\cap V(F_{v_1})\neq \emptyset$. Let $H=G-\{v_1,v_2,v_3,v_4\}$ and choose $v_5\in N_G(v_4)\cap V(F_{v_1})$. Applying the observation to $(v_1,v_2v_3v_4)$, we obtain $N_G(v_3)\cap V(F_{v_1})=\emptyset$.  Since $G$ is triangle-free, $\delta(H)\geq\delta(G)-3\geq k+1$. By Theorem \ref{F1}, $\kappa(H-\{v_5\})\geq k$. Moreover, since $|F_{v_1}|\geq\delta(H)-k+1\geq2$, by Lemma \ref{subend}, $\kappa(H-\{v_5\})=k$ and $H-\{v_5\}$ has an end $F'\subseteq F_{v_1}$ such that $N_G(v_5)\cap V(F')\neq\emptyset$.

Since $v_2$ has at least $\delta(G)-4\geq k$ neighbors in $H-\{v_5\}$ and $N_G(v_2)\cap V(F')=\emptyset$, by Lemma \ref{lem:add-vertex-outside-end}, the connectivity of $H'=G-(\{v_1\}\cup V(v_3v_4v_5))=G[V(H-\{v_5\})\cup \{v_2\}]$ is $k$ and $F'$ is still an end of $H'$. Applying the observation to $(v_1,v_3v_4v_5)$, we obtain $N_G(v_4)\cap V(F')=\emptyset$. Since $N_G(v_3)\cap V(F_{v_1})=\emptyset$ and $F'\subseteq F_{v_1}$, it follows that $N_G(F')\cap\{v_3,v_4,v_5\}=\{v_5\}$. Thus $(v_1,v_3v_4v_5)$ and $F'$ satisfy the same conditions, contradicting the choice of $F_{v_1}$ as $|F'|<|F_{v_1}|$.
\end{proof}
\subsection{Completion of the proof}\label{trifull}
\begin{proof}
Suppose, to the contrary, that Theorem \ref{triangle} is false. Let $P_s^+(i)$ be a counterexample of minimum order. By Theorem \ref{F1} and Subsection \ref{tri4}, we may assume that $P_s^+(i)$ is not a path and $s\geq 5$.

Let $G$ be a $k$-connected triangle-free graph with $\delta(G)\geq k+\max(P_s^+(i))+[P_s^+(i)\text{ is bad}]$. Let $U$ be an end of $G$ if $\kappa(G)=k$, and let $U=G$ otherwise, such that $U$ contains no subgraph $H\cong P_s^+(i)$ with $\kappa(G-V(H))\geq k$.

Let $c$ be the vertex of degree three in $P_s^+(i)$, and let $L_j=v_j^1\cdots v_j^{l_j}$ for $j\in[3]$ be the three paths of $P_s^+(i)-c$, where
$v_j^1$ is adjacent to $c$ and $v_j^0=c$. Without loss of generality, we assume that $l_3=1$. If $P_s^+(i)$ is bad, choose $L_1$ to be a longest leg; otherwise, choose $L_1$ such that $l_1$ is odd and $l_1\geq3$. Thus $l_1\geq2$ and $l_1$ is even when $P_s^+(i)$ is bad.

By the choice of $L_1$, we have $\max(P_s^+(i))+[P_s^+(i)\text{ is bad}] \geq \max(P_s^+(i)-v_1^{l_1}) +[P_s^+(i)-v_1^{l_1}\text{ is bad}]$. Hence, by the minimality of $s$, $U$ contains a subgraph $T\cong P_s^+(i)-v_1^{l_1}$ such that $\kappa(G-V(T))\geq k$. Let $\phi$ be the embedding from $P_s^+(i)-v_1^{l_1}$ to $T$, let $\phi(c)=c'$ and $\phi(v_j^r)=u_j^r$ for every vertex $v_j^r\in V(P_s^+(i)-v_1^{l_1})$. Write $L_1'=u_1^1\cdots u_1^{l_1-1}$, $L_2'=u_2^1\cdots u_2^{l_2}$, and $L_3'=u_3^1$. Also, let $u_1^0=c'$.

\begin{claim}\label{degreek+1}
Let $T^\star$ be a tree in $G$ isomorphic to $P_s^+(i)-V(L_1[v_1^j,v_1^{l_1}])$, where $j\in[l_1]$. Then $|N_G(u)\cap V(G-T^\star)|\ge k+\left \lceil \frac{l_1-j+1}{2}\right \rceil$ for every vertex $u\in V(G)$.
\end{claim}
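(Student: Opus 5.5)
The plan is a pure degree count. We compare $\delta(G)$ with the number of neighbours a vertex can have inside $T^\star$, which Lemma \ref{vT^*} bounds via the triangle-free hypothesis. Fix $u\in V(G)$. Then
\[
|N_G(u)\cap V(G-T^\star)|\ \ge\ d_G(u)-|N_G(u)\cap V(T^\star)|\ \ge\ k+\max(P_s^+(i))+[P_s^+(i)\text{ is bad}]-\max(T^\star),
\]
where the second inequality uses the degree hypothesis on $G$ and Lemma \ref{vT^*} applied to $T^\star$. We need Lemma \ref{vT^*} also when $j=1$. In that case $T^\star$ is the path $L_2cL_3$, which is $P^+_{l_2+2}(1)$ in the paper's notation. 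Its bound $\lceil |T^\star|/2\rceil$ is the trivial independence bound for a path in a triangle-free graph, so the lemma applies. It therefore suffices to prove the purely combinatorial inequality
\[
\max(P_s^+(i))-\max(T^\star)+[P_s^+(i)\text{ is bad}]\ \ge\ \left\lceil \tfrac{l_1-j+1}{2}\right\rceil .
\]

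To compute the two maxima, we use the formula from the proof of Lemma \ref{vT^*} with the leaf $t=v_3^1$ (recall $l_3=1$) and $t'=c$. This gives $\max(P_s^+(i))=\lceil l_1/2\rceil+\lceil l_2/2\rceil+1$. Similarly, $\max(T^\star)=\lceil (j-1)/2\rceil+\lceil l_2/2\rceil+1$, since $T^\star$ is the same tree with leg $L_1$ shortened to length $j-1$. This formula also holds for $j=1$, because $\lceil (l_2+2)/2\rceil=\lceil l_2/2\rceil+1$. Setting $a=j-1$ and $r=l_1-j+1$, so that $l_1=a+r$ and $r\ge1$, the inequality becomes
\[
\left\lceil \tfrac{a+r}{2}\right\rceil-\left\lceil \tfrac{a}{2}\right\rceil+[P_s^+(i)\text{ is bad}]\ \ge\ \left\lceil \tfrac{r}{2}\right\rceil .
\]

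We finish with a parity check. If $a$ is even, the left difference is exactly $\lceil r/2\rceil$. If $a$ is odd and $r$ is even, it equals $r/2=\lceil r/2\rceil$. The only deficient case is $a$ and $r$ both odd, where the difference is $\lceil r/2\rceil-1$; here $l_1=a+r$ is even. By the choice of $L_1$, if $P_s^+(i)$ is good then $l_1$ is odd, so this case cannot occur. If instead $P_s^+(i)$ is bad, the Iverson term contributes $1$ and closes the gap. The only delicate point is exactly this deficient parity case; the rest is direct substitution, so I expect no real obstacle beyond verifying the formula for $\max(T^\star)$ at the boundary value $j=1$.
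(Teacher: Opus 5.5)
Your proposal is correct and follows the paper's proof. It uses the same degree count $\delta(G)-\max(T^\star)$ via Lemma~\ref{vT^*}, the same formulas $\max(P_s^+(i))=\lceil l_1/2\rceil+\lceil l_2/2\rceil+1$ and $\max(T^\star)=\lceil (j-1)/2\rceil+\lceil l_2/2\rceil+1$, and the same parity argument based on the choice of $L_1$: $l_1$ is odd when $P_s^+(i)$ is good, and the Iverson term covers the deficient case when it is bad. Your explicit case split on the parities of $a$ and $r$, and your check of the boundary case $j=1$, add detail the paper leaves implicit without changing the argument.
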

\begin{proof}
Choose an arbitrary vertex $u\in V(G)$, then $|N_G(u)\cap V(T^\star)|\leq \max(T^\star)$ by Lemma \ref{vT^*}. Note that $\max(T^\star)=\left \lceil \frac{j-1}{2}\right \rceil+\left \lceil \frac{l_2}{2}\right \rceil+1$. Therefore,
\begin{align*}
|N_G(u)\cap V(G-V(T^\star))|&\geq \delta(G)-|N_G(u)\cap V(T^\star)|\\
&\geq k+\max(P_s^+(i))+[P_s^+(i)\text{ is bad}]-\max(T^\star)\\
&=k+(\left \lceil \frac{l_1}{2}\right \rceil+\left \lceil \frac{l_2}{2}\right \rceil+1)+[P_s^+(i)\text{ is bad}]-(\left \lceil \frac{j-1}{2}  \right \rceil+\left \lceil \frac{l_2}{2}\right \rceil+1)\\
&\geq k+\left \lceil \frac{l_1-j+1}{2}  \right \rceil.
\end{align*}
For the last inequality, if $P_s^+(i)$ is good, then $l_1$ is odd and $\left \lceil \frac{l_1}{2}\right \rceil-\left \lceil \frac{j-1}{2}  \right \rceil=\left \lceil \frac{l_1-j+1}{2}  \right \rceil$. Otherwise, $l_1$ is even and the additional term $[P_s^+(i)\text{ is bad}]=1$ yields the required inequality. This completes the proof of Claim \ref{degreek+1}.
\end{proof}
\begin{claim}\label{degreel_1}
For any $j\in [l_1]$, if $U$ contains a subgraph $T^\star\cong P_s^+(i)-V(L_1[v_1^j, v_1^{l_1}])$ with an isomorphism $\sigma: P_s^+(i)-V(L_1[v_1^j,v_1^{l_1}])\to T^\star$ such that $\kappa(G-V(T^\star))=k$ and $N_G(\sigma(v_1^{j-1}))\cap V(F_j)\neq\emptyset$, where $F_j$ is an end of $G-V(T^\star)$ with $F_j\subseteq U$, then $U$ contains a subgraph $H \cong P_s^+(i)$ such that $\kappa(G-V(H))\ge k$.
\end{claim}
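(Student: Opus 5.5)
The plan mirrors Claim \ref{gl_1} from Subsection \ref{genfull}, with Lemma \ref{mpath} replaced by Theorem \ref{F1} and the degree bound supplied by Claim \ref{degreek+1}. Fix $x\in N_G(\sigma(v_1^{j-1}))\cap V(F_j)$ and set $G^\star=G-V(T^\star)$, which satisfies $\kappa(G^\star)=k$ and has $F_j$ as an end. The aim is to find a path $P_x$ in $F_j$ that starts at $x$, has order $l_1-j+1$ (the number of deleted vertices $v_1^j,\dots,v_1^{l_1}$), and satisfies $\kappa(G^\star-V(P_x))\ge k$. Attaching $P_x$ to $T^\star$ through the edge $\sigma(v_1^{j-1})x$ then gives $H\cong P_s^+(i)$ with $V(H)\subseteq V(U)$, since $T^\star\subseteq U$ and $F_j\subseteq U$. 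We also get $G-V(H)=G^\star-V(P_x)$, which is $k$-connected.

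To produce $P_x$ we apply Theorem \ref{F1} to $G^\star$, viewed as a triangle-free graph of connectivity exactly $k$.
\begin{itemize}
\item Every vertex $u\in V(G^\star)$ satisfies $d_{G^\star}(u)=|N_G(u)\cap V(G-T^\star)|\ge k+\lceil\frac{l_1-j+1}{2}\rceil$ by Claim \ref{degreek+1}.
\item With $m'=l_1-j+1$, Theorem \ref{F1} requires $\delta(G^\star)\ge k+\frac{m'+1}{2}$. Since degrees are integers, this is the same as $\delta(G^\star)\ge k+\lceil\frac{m'+1}{2}\rceil$.
\end{itemize}

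The main obstacle is a parity gap. Claim \ref{degreek+1} supplies only $k+\lceil m'/2\rceil$, and this equals the required $k+\lceil (m'+1)/2\rceil$ only when $m'$ is odd. When $m'$ is odd, part 1 of Theorem \ref{F1} applies directly: for the end $F_j$ of $G^\star$ and the vertex $x\in V(F_j)$, it gives a path $P_x\subseteq F_j$ of order $m'$ with endvertex $x$ and $\kappa(G^\star-V(P_x))\ge k$, and we are done.

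When $m'$ is even, I would first look for slack in Claim \ref{degreek+1}. The computation there has slack of about $1$ whenever $P_s^+(i)$ is bad, or whenever $\lceil\frac{l_1}{2}\rceil-\lceil\frac{j-1}{2}\rceil$ exceeds $\lceil\frac{l_1-j+1}{2}\rceil$. Concretely, $l_1$ is even in the bad case, $l_1$ is odd in the good case, and $m'=l_1-j+1$.
\begin{itemize}
\item Good case ($l_1$ odd), $m'$ even: then $j$ is even, so $\lceil\frac{l_1}{2}\rceil-\lceil\frac{j-1}{2}\rceil=\frac{l_1+1}{2}-\frac{j}{2}=\frac{m'}{2}$. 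This leaves no slack, so a refinement is needed.
\item Bad case ($l_1$ even), $m'$ even: then $j$ is odd, and the difference is $\frac{l_1}{2}-\frac{j-1}{2}=\frac{m'}{2}$. The Iverson term adds $1$, giving $k+\frac{m'}{2}+1=k+\lceil\frac{m'+1}{2}\rceil$, which is exactly enough.
\end{itemize}

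So the real difficulty is the good case with $m'$ even. There I would sharpen the count in Lemma \ref{vT^*}: a vertex $u$ can be adjacent to at most one of $\sigma(v_1^{j-1})$ and its neighbours, so either $u$'s neighbourhood in $T^\star$ misses something, or it hits $x$'s attachment point in a way that forces an extra neighbour outside. If that does not suffice, I would build $P_x$ greedily instead. Take $x$ itself, then apply Theorem \ref{F1} with odd order $m'-1$ in $G^\star-x$, with a new end inside $F_j$ obtained through Lemma \ref{subend}, and link the two using the fact that the new end meets $N_G(x)$. This is the same end-shrinking device used in the $K_{1,3}$ case.
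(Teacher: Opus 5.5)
Your plan is the paper's: fix $x$, apply Theorem~\ref{F1} inside $F_j$ when $m'=l_1-j+1$ is odd, and first extend the leg by one vertex when $m'$ is even. Your separate handling of the bad case with $m'$ even is valid; it uses the extra $1$ visible in the intermediate inequality of Claim~\ref{degreek+1}. The paper does not need it, because its one-step extension covers every even case.

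The gap is in the good case with $m'$ even, exactly at the obstacle you flagged. Your fallback is the right construction: delete $x$, pass via Lemma~\ref{subend} to an end $F'\subseteq F_j$ of $G^\star-x$ meeting $N_G(x)$, then apply Theorem~\ref{F1} with odd order $m'-1$ from a neighbour of $x$ in $F'$. But you never verify the degree hypothesis of Theorem~\ref{F1} in $G^\star-x$, namely $\delta(G^\star-x)\ge k+\frac{m'}{2}$. All you have is $\delta(G^\star)\ge k+\frac{m'}{2}$, which gives only $\delta(G^\star-x)\ge k+\frac{m'}{2}-1$. So as written this step fails.

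Your first idea cannot rescue it. Nothing in the hypotheses excludes a vertex $u$ of $G^\star$ with $d_G(u)=\delta(G)$ that is adjacent to the entire larger colour class of $T^\star$. Hence $\delta(G^\star)$ cannot be pushed above $k+\frac{m'}{2}$ by sharpening Lemma~\ref{vT^*}.

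The missing idea is to apply Claim~\ref{degreek+1} to the enlarged tree $T'=T^\star+x$, obtained by adding the edge $\sigma(v_1^{j-1})x$. This $T'$ is a copy of $P_s^+(i)-V(L_1[v_1^{j+1},v_1^{l_1}])$ with $j+1\le l_1$. The claim gives $\delta(G-V(T'))\ge k+\lceil\frac{l_1-j}{2}\rceil=k+\frac{m'}{2}$, exactly what Theorem~\ref{F1} needs for order $m'-1$. This works because $j$ is even here, so $\max(T')=\max(T^\star)$. The triangle-free observation you gesture at (no vertex is adjacent to both $\sigma(v_1^{j-1})$ and $x$) does its job in $G^\star-x$, where it makes deleting $x$ cost nothing; it cannot help in $G^\star$.

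You should also justify the hypotheses of Lemma~\ref{subend}:
\begin{itemize}
\item $\kappa(G^\star-x)\ge k$ follows from Theorem~\ref{F1} with order $1$, since $\delta(G^\star)\ge k+1$.
\item $\{x\}\subsetneq V(F_j)$ holds because $|F_j|\ge\delta(G^\star)-k+1\ge 2$.
\end{itemize}
With these additions your argument coincides with the paper's even case, which extends $T^\star$ by $w$ and then reapplies the odd case to $T'$ with index $j+1$.
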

\begin{proof}
Choose $w\in N_G(\sigma(v_1^{j-1}))\cap V(F_j)$. We distinguish two cases according to the parity of $l_1-j+1$.

If $l_1-j+1$ is odd, then $\delta(G-V(T^\star))\geq k+\left \lceil \frac{l_1-j+1}{2} \right \rceil$ by Claim \ref{degreek+1}, and thus there exists a path $P_w$ of order $l_1-j+1$ starting from $w$ in $F_j$ such that $\kappa(G-(V(T^\star)\cup V(P_w)))\geq k$ by Theorem \ref{F1}. Joining $P_w$ to $T^\star$ by the edge $\sigma(v_1^{j-1})w$ gives the required subgraph.

If $l_1-j+1$ is even, since $\delta(G-V(T^\star))\geq k+\left \lceil \frac{l_1-j+1}{2} \right \rceil\geq k+1$, $\kappa(G-(V(T^\star)\cup \{w\}))\geq k$ by Theorem \ref{F1}. Moreover, $|F_j|\geq\delta(G-V(T^*))-k+1\geq2$. By Lemma \ref{subend}, $\kappa(G-(V(T^\star)\cup \{w\}))=k$ and $G-(V(T^\star)\cup \{w\})$ contains an end $F'\subseteq F_j$ such that $N_G(w)\cap V(F')\neq \emptyset$. Let $T'$ be obtained from $T^\star$ by adding $w$ and the edge $\sigma(v_1^{j-1})w$. Then $T'\subseteq U$ is isomorphic to $P_s^+(i)-V(L_1[v_1^{j+1},v_1^{l_1}])$ with $v_1^j$ corresponding to $w$. Applying Claim \ref{degreek+1} to $T'$ with the index $j+1$, we obtain $\delta(G-V(T'))\ge k+\left\lceil\frac{l_1-j}{2}\right\rceil$. Since $l_1-j$ is odd, the preceding odd case applies.
\end{proof}

Let $G_1=G-V(T)$. We claim that $\kappa(G-V(T))=k$. If not, then $\kappa(G-V(T))>k$ since $\kappa(G-V(T))\ge k$. By Claim \ref{degreek+1}, we obtain that $|N_G(u_1^{l_1-1})\cap V(G-V(T))|\geq k+1>0$. Thus, $u_1^{l_1-1}$ has at least $k+1-k\geq 1$ neighbors in $U$. Hence, there exists a vertex $u'\in N_{U}(u_1^{l_1-1})\cap V(G-V(T))$. Clearly, $G[V(T)\cup \{u'\}]$ contains a subgraph isomorphic to $P_s^+(i)$ in $U$ and $\kappa(G-(V(T)\cup \{u'\}))\geq k$ since $\kappa(G-V(T))>k$, a contradiction.

Choose an end $F\subseteq U$ of $G_1$ such that $N_G(T)\cap V(F)\neq\emptyset$. When $\kappa(G)=k$, choose an edge $xy$ of $U$.
Since $U$ is triangle-free, the neighborhoods of $x$ and $y$ in $U$ are disjoint. Hence $|U|\ge d_{U}(x)+d_{U}(y)\ge2(k+\max(P_s^+(i))+[P_s^+(i)\text{ is bad}]-k)\ge s>|T|$, thus such an end exists by Lemma \ref{subend}. When $\kappa(G)>k$, any end of $G_1$ has the required property; otherwise its neighbor set in $G_1$ would be a $k$-cutset of $G$.

By Claim \ref{degreek+1} with $j=l_1$, every vertex of $T$ has at least $k+1$ neighbors in $G_1$. Consequently, for every $X\subseteq V(T)$ with $N_G(X)\cap V(F)=\emptyset$, Lemma \ref{lem:add-vertex-outside-end} implies that $G[V(G_1)\cup X]$ has connectivity $k$ and still has $F$ as an end. We proceed by distinguishing the following three cases according to the adjacency between $T$ and $F$:

{\bf Case A.} $N_G(c'L_1')\cap V(F)\neq \emptyset$.

Let $j$ be the largest index such that $N_G(u_1^j)\cap V(F)\neq\emptyset$, where $0\leq j\leq l_1-1$. Let $X=\{u_r^1:j+1\leq r\leq l_1-1\}$ and $T'=T-X$. By the choice of $j$, $N_G(X)\cap V(F)=\emptyset$. Thus $\kappa(G-V(T'))=k$ and $F$ remains an end of $G-V(T')$ by Lemma \ref{lem:add-vertex-outside-end}. Applying Claim \ref{degreel_1} with $j+1$ gives a contradiction.

{\bf Case B.} $N_G(L_3')\cap V(F)\neq \emptyset$.

By Case A, $N_G(c'L_1')\cap V(F)=\emptyset$. Let $T'=T-\{u_r^1:2\leq r\leq l_1-1\}$. Then $\kappa(G-V(T'))=k$ and $F$ remains an end of $G-V(T')$ by Lemma \ref{lem:add-vertex-outside-end}. Interchanging the roles of $u_1^1$ and $u_1^3$ in $T'$, Claim \ref{degreel_1} with $j=2$ gives a contradiction.

{\bf Case C.} $N_G(V(L_2'))\cap V(F)\neq \emptyset$.

Let $j$ be the largest index such that $N_G(u_j^2)\cap V(F)\neq\emptyset$. Let $Q$ be the subpath of $L_2'[u_j^2,u_1^2]c'L_1'$ of order $\min\{j+l_1,l_2+2\}$ starting at $u_j^2$. Since $j\leq l_2$ and $l_1\geq2$, the other endvertex of $Q$ lies in $L_1'$. By Cases A and B and the choice of $j$, exactly one endvertex of $Q$ has a neighbor in $F$ and $N_G(T-V(Q))\cap V(F)=\emptyset$. By Lemma \ref{lem:add-vertex-outside-end}, $\kappa(G-V(Q))=k$ and $F$ remains an end of $G-V(Q)$.

We next obtain such a path of order $l_2+2$. For every path $R$ of order at most $l_2+2$ in $G$, $\delta(G-V(R))
\geq (k+\left \lceil \frac{l_1}{2} \right \rceil +\left \lceil \frac{l_2}{2} \right \rceil +1)-\left \lceil \frac{l_2+2}{2} \right \rceil\geq k+1$.
Suppose that $|Q|<l_2+2$, and let $a$ be the endvertex of $Q$ having a neighbor $x$ in $F$. By Theorem \ref{F1}, $\kappa(G-(V(Q)\cup\{x\}))\geq k$.
Since $|F|\geq\delta(G-V(Q))-k+1\geq2$, it follows from Lemma \ref{subend} that $\kappa(G-(V(Q)\cup\{x\}))=k$ and $G-(V(Q)\cup\{x\})$ contains an end
$F'\subseteq F$ such that $N_G(x)\cap V(F')\neq\emptyset$. Extending $Q$ by the edge $ax$, we obtain a longer path with exactly one endvertex having a neighbor in $F'$. Repeating this argument yields a path of order $l_2+2$ with the same properties.

Choose a path $P=x_1\cdots x_{l_2+2}\subseteq U$ and an end $F_P\subseteq U$ of $G-V(P)$ such that $\kappa(G-V(P))=k$ and exactly one endvertex of $P$ has a neighbor in $F_P$ with $|F_P|$ as small as possible. Without loss of generality, assume that $N_G(x_1)\cap V(F_P)=\emptyset$ and $N_G(x_{l_2+2})\cap V(F_P)\neq\emptyset$.

We have $N_G(x_2)\cap V(F_P)=\emptyset$. Otherwise, viewing $P$ as a copy of $P_s^+(i)-V(L_1)$ with $c$ corresponding to $x_2$, Claim \ref{degreel_1} with $j=1$ gives a contradiction.

Choose $y\in N_G(x_{l_2+2})\cap V(F_P)$. Since $\delta(G-V(P))\geq k+\left \lceil \frac{l_1}{2} \right \rceil\geq k+1$, $\kappa(G-(V(P)\cup \{y\}))\geq k$ by Theorem \ref{F1}. Moreover, since $|F_P|\geq\delta(G-V(P))-k+1\geq 2$, $\kappa(G-(V(P)\cup \{y\}))=k$ and $G-(V(P)\cup \{y\})$ contains an end
$F'\subseteq F_P$ such that $N_G(y)\cap V(F')\neq\emptyset$ by Lemma \ref{subend}.

Let $P'=x_2\cdots x_{l_2+2}y$.  The vertex $x_1$ has no neighbor in $F'$ and has at least $\delta(G)-\left \lceil \frac{l_2+2}{2} \right \rceil-1\geq k+\left \lceil \frac{l_1}{2} \right \rceil-1\geq k$ neighbors in $G-(V(P)\cup \{y\})$. By Lemma \ref{lem:add-vertex-outside-end}, adjoining $x_1$ implies $\kappa(G-V(P'))=k$ and $F'$ is still an end of $G-V(P')$. Since $N_G(x_2)\cap V(F_P)=\emptyset$, exactly one endvertex of $P'$ has a neighbor in $F'$. Together with $P'\subseteq U$ and $F'\subseteq U$, this contradicts the choice of $(P,F_P)$ as $|F'|<|F_P|$.

This completes the proof of Theorem \ref{triangle}.
\end{proof}
\section{Proof of Proposition \ref{propk=2}}\label{Prop1.1}
Suppose to the contrary that no such edge exists. Let $E_0=\{e\in E(G):e\text{ is contained in a }\linebreak2\text{-cutset of }G\}$, and let $E_1=E(G)\setminus E_0$. Since $G-V(e)$ is not $2$-connected for every edge $e\in E(G)$, every edge in $E_1$ is contained in a $3$-cutset of $G$.

For $i=0,1$, let $\mathcal S_i$ be the family of $(2+i)$-cutsets $S$ of $G$ such that $G[S]$ contains an edge in $E_i$. Choose $S\in \mathcal S_0\cup \mathcal S_1$ and a semifragment $F$ of $G$ to $S$ such that $|F|$ is as small as possible. Let $\bar{F}=G-(V(F)\cup S)$.
\begin{claim}\label{F^*}
For any $S^*\in \mathcal S_0\cup \mathcal S_1$ and any semifragment $F^*$ of $G$ to $S^*$, $|F^*|\geq 3$.
\end{claim}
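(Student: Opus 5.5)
The plan is a degree count inside a smallest piece of a cut. Let $F^*$ be a semifragment of $G$ to $S^*\in\mathcal S_0\cup\mathcal S_1$, so $|S^*|\le 3$. We will use two facts throughout: $\delta(G)\ge 3$, and $G$ is triangle-free, so $N_G(v)$ is an independent set for every vertex $v$.

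First consider the case $|F^*|=1$, say $V(F^*)=\{v\}$. Then $N_G(v)\subseteq S^*$ and $d_G(v)\ge 3$, so $|S^*|=3$ and $N_G(v)=S^*$. Because $G$ is triangle-free, $S^*$ is then an independent set. But $S^*\in\mathcal S_0\cup\mathcal S_1$ means $G[S^*]$ contains an edge, which is a contradiction. The same argument rules out $|S^*|=2$ outright, since a vertex of degree at least $3$ cannot have all its neighbours in a $2$-set.

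Next consider the case $|F^*|=2$. The two vertices must be adjacent: if $F^*$ were disconnected, each vertex would be an isolated component with all its neighbours in $S^*$, and we are back in the previous case. So let $V(F^*)=\{a,b\}$ with $ab\in E(G)$. Then $N_G(a)\setminus\{b\}\subseteq S^*$ and $N_G(b)\setminus\{a\}\subseteq S^*$, and each of these sets has size at least $2$. Triangle-freeness makes them disjoint, since a common neighbour would close a triangle with the edge $ab$. Hence $|S^*|\ge 4$, contradicting $|S^*|\le 3$.

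Combining the two cases gives $|F^*|\ge 3$. The only delicate point is the case $|S^*|=3$ with $|F^*|=1$, and it is settled exactly by the requirement that $G[S^*]$ contains an edge.
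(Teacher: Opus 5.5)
Your proof is correct and uses essentially the paper's argument: the paper also rules out an edgeless $F^*$, since a vertex with all neighbours in $S^*$ misses one endpoint of the edge in $G[S^*]$ and so has degree at most $|S^*|-1\le 2$; for an edge $uv$ in $F^*$, it uses the disjointness of $N_G(u)$ and $N_G(v)$ to get $|F^*|\ge d_G(u)+d_G(v)-|S^*|\ge 3$. Your case split on $|F^*|\in\{1,2\}$ is just a rearrangement of these same two observations.
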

\begin{proof}
Since $S^*\in \mathcal S_0\cup \mathcal S_1$, $G[S^*]$ contains an edge, say $ab$. Since $G$ is triangle-free, every vertex of $F^*$ is adjacent to at most one of $a$ and $b$. If $F^*$ has no edge, then every vertex of $F^*$ has degree at most $|S^*|-1\leq 2$, contradicting $\delta(G)\geq 3$. Hence $G[F^*]$ contains an edge $uv$. Thus $|F^*|\geq |N_G(u)\cup N_G(v)|-|S^*|=d_G(u)+d_G(v)-|S^*|\geq 6-3=3$.
\end{proof}
The minimality of $F$ implies that $F$ is connected. Take an edge $uv\in E(F)$. If $uv\in E_i$, choose $S'\in\mathcal S_i$ containing $u$ and $v$. Let $F'$ be a semifragment of $G$ to $S'$, and set $\bar{F'}=G-(S'\cup V(F'))$.  We use the following notation for the nine intersections:
\begin{align*}
A_1=F\cap F', A_2=S\cap F',A_3=\bar F\cap F',\\
A_4=F\cap S',A_5=S\cap S',A_6=\bar F\cap S',\\
A_7=F\cap \bar{F'},A_8=S\cap \bar{F'},A_9=\bar F\cap \bar{F'}.
\end{align*}
\begin{claim}\label{Ssize}
$S\in \mathcal S_1$ and $E(G[F])\subseteq E_1$.
\end{claim}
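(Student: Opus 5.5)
The plan is a crossing argument with the nine sets $A_1,\dots,A_9$, run against the minimality of $|F|$, in two stages. First I show that every edge $uv\in E(G[F])$ lies in $E_1$. Then I show that $S\in\mathcal S_1$. I use throughout that $G$ is triangle-free and $\delta(G)\ge 3$, so a vertex with no neighbours in some region has most of its neighbours in the cutsets. I also use Claim \ref{F^*}, which says every semifragment with respect to a member of $\mathcal S_0\cup\mathcal S_1$ has order at least $3$.

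\emph{Stage 1: $uv\notin E_0$.} Suppose $uv\in E_0$, so $S'=\{u,v\}$ is a $2$-cutset. Since $u,v\in V(F)$, we get $A_4=\{u,v\}$ and $A_5=A_6=\emptyset$. Hence $S\cup V(\bar F)=A_2\cup A_3\cup A_8\cup A_9$. Because $G-S'$ is disconnected while $F$ is connected, one side of $S'$ must meet $F$; replacing $F'$ by $\bar{F'}$ if needed, I choose a side with $A_1\ne\emptyset$. I split according to whether $A_7$ is empty.
\begin{itemize}
\item If $A_7\neq\emptyset$, the corner $A_7$ is separated from the rest of $G$ by $N_G(A_7)\subseteq A_4\cup A_5\cup A_8=\{u,v\}\cup A_8$. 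Counting gives $|A_8|\le|S|-|A_2|$. I then show that a corner cut $\{u,v\}\cup A_8$ of order at most $3$ (or a $2$-subcut of it) still contains the edge $uv$, and that it lies in $\mathcal S_0\cup\mathcal S_1$. This uses that $uv$ lies in a $2$-cutset; if the corner cut has order $3$ but no $E_1$-edge, I expect to pass instead to the $2$-cut $\{u,v\}$ restricted to that corner. Then $A_7$ (or $A_1$) is a semifragment strictly smaller than $F$, contradicting minimality.
\item If $A_7=\emptyset$, the side $\bar{F'}$ lies inside $S\cup V(\bar F)$. Its corners $A_8$ and $A_9$ then have neighbourhoods inside $S\cup\{u,v\}$. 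I compare sizes using $|S|\le 3$ and $|S'|=2$, and use Claim \ref{F^*}, which gives $|\bar{F'}|\ge 3$. The goal is to force $A_9\ne\emptyset$. Then $A_5\cup A_6\cup A_8=A_8$ must be a cut of order at most $1$, impossible since $G$ is $2$-connected. Alternatively, the symmetric corner $A_3$ or $A_1$ gives a smaller semifragment.
\end{itemize}

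\emph{Stage 2: $S\in\mathcal S_1$.} Suppose $S\in\mathcal S_0$, so $|S|=2$ and $S$ induces an edge $ab$. Pick $uv\in E(F)$; by Stage 1 it lies in $E_1$ and in a $3$-cutset $S'\in\mathcal S_1$. Now $|S|+|S'|=5$, and the usual submodular inequality for crossing cuts is available. If both $A_1$ and $A_7$ are nonempty, the two corner cuts $A_2\cup A_4\cup A_5$ and $A_4\cup A_5\cup A_8$ have total order at most $5$. So one of them has order at most $2$, or has order $3$ and contains $uv$. That corner is a smaller semifragment for a member of $\mathcal S_0\cup\mathcal S_1$. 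If instead one of $A_1,A_7$ is empty, I argue as in Stage 1. The empty corner forces $A_4\supseteq\{u,v\}$ to have enough outside neighbours. Triangle-freeness applied to $ab$ and to $uv$ bounds the number of neighbours in $S\cup S'$, and $\delta\ge 3$ then gives the contradiction.

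\emph{Main obstacle.} The hard step is checking that each corner cut produced is genuinely in $\mathcal S_0\cup\mathcal S_1$, meaning it has the right size and contains an edge of the right class. An order-$3$ corner that contains only an $E_0$-edge would not be in $\mathcal S_1$. I would handle this by noting that such an edge lies in a $2$-cutset and passing to that $2$-cutset, whose corresponding side inside the corner is still smaller than $F$.
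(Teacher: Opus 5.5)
Your overall framework (crossing $S$ with a cut $S'$ through an edge $uv$ of $F$, using the nine sets and the minimality of $|F|$) is the paper's. However, the decisive steps in both stages are either missing or wrong.

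\textbf{Stage 2.} The inequality you rely on is false. The corners $A_1$ and $A_7$ are adjacent (they share the strip $A_4$), and since $\{u,v\}\subseteq A_4$,
\[
|A_2\cup A_4\cup A_5|+|A_4\cup A_5\cup A_8|=|S|+|S'|+|A_4|-|A_6|\ge 5+2-1=6.
\]
The identity with sum $|S|+|S'|$ holds only for opposite corners ($A_1,A_9$ or $A_3,A_7$). What actually makes the argument work is an orientation you never use. The two vertices of $S$ are adjacent, so they cannot lie on opposite sides of $S'$. After swapping $F'$ and $\bar{F'}$ one may assume $S\subseteq A_2\cup A_5$, i.e.\ $A_8=\emptyset$. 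Then $A_7$ is separated by $A_4\cup A_5\subseteq S'$, which contains $u,v$ and hence equals $\{u,v\}\in\mathcal S_0$ or $S'\in\mathcal S_1$. So $A_7=\emptyset$ outright, with no need for $A_1\ne\emptyset$. Next, $A_9=\emptyset$ follows from the opposite-corner count $|A_2\cup A_4\cup A_5|+|A_5\cup A_6\cup A_8|\ge 4+2>5$. Hence $\bar{F'}=\emptyset$, a contradiction. Your treatment of the case where one of $A_1,A_7$ is empty (``enough outside neighbours'', ``$\delta\ge 3$ gives the contradiction'') does not name an actual contradiction.

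\textbf{Stage 1.} This stage has the same defect. Orienting by $A_1\ne\emptyset$ gives no control over $A_8$. So your assertion that $A_8$ is a cut ``of order at most $1$'' is unsupported; it could have order $2$ or $3$. In the sub-case $A_7\ne\emptyset$, the corner cut $\{u,v\}\cup A_8$ can have order up to $5$. Alternatively it can be an order-$3$ set with no $E_1$-edge, which lies in neither $\mathcal S_0$ nor $\mathcal S_1$. ``Passing to $\{u,v\}$ restricted to that corner'' is exactly the statement that needs proof.

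In the paper, the bound $|A_8|\le 1$ comes from placing the $E_1$-edge $xy$ of $S$ in $A_2$, which presupposes $S\in\mathcal S_1$. That is why the paper proves $S\in\mathcal S_1$ first: that half needs only $|S'|\le 3$, not $uv\in E_1$. It then uses $xy\in E_1$ again to force $|A_2|\ge 3$.

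If you want to keep your order, there is a short uniform fix:
\begin{itemize}
\item With $S'=\{u,v\}$ we have $A_5=A_6=\emptyset$, so $|A_2|+|A_8|=|S|\le 3$. After swapping sides, $|A_2|\le 1$.
\item Then $N_G(A_3)\subseteq A_2$, so $2$-connectivity forces $A_3=\emptyset$.
\item Hence $F'=A_1\cup A_2$ is a semifragment to $\{u,v\}\in\mathcal S_0$ with $|F'|\le|A_1|+1<|F|$, contradicting minimality.
\end{itemize}
As written, though, neither stage is established.
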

\begin{proof}
Suppose that $S\in \mathcal S_0$. Then $|S|=2$. Since $G[S]$ is an edge, by interchanging $F'$ and $\bar{F'}$ if necessary, we may assume that $S\subseteq A_2\cup A_5$. Thus $A_8=\emptyset$.

If $A_7\neq\emptyset$, then $A_4\cup A_5\cup A_8$ is a cut set of $G$ with $|A_4\cup A_5\cup A_8|=|A_4\cup A_5|\leq 3$. Since $uv\in E(G[A_4\cup A_5])$, $A_4\cup A_5\cup A_8\in \mathcal S_0\cup \mathcal S_1$ and $A_7$ is a semifragment of $G$ to $A_4\cup A_5\cup A_8$ with $|A_7|<|F|$, contradicting the minimality of $F$.

If $A_9\neq\emptyset$, then $A_5\cup A_6\cup A_8$ is a cut set of $G$ and thus $|A_5\cup A_6\cup A_8|\geq 2$ since $G$ is $2$-connected. Recall that $\{u,v\}\subseteq A_4$ and $S\subseteq A_2\cup A_5$. So we have $6\le4+2\le|A_2\cup A_4\cup A_5|+|A_5\cup A_6\cup A_8|=|S|+|S'|\le5$, a contradiction.

We have now obtained $A_7=A_8=A_9=\emptyset$ and thus $\bar{F'}=\emptyset$, contradicting the definition of a semifragment. Therefore, $S\in \mathcal S_1$.

Now suppose that $uv\in E_0$. Then $S'=\{u,v\}$, and hence $A_4=\{u,v\}$ and $A_5=A_6=\emptyset$. Choose $xy\in E(G[S])\cap E_1$. By interchanging $F'$ and $\bar{F'}$ if necessary, we may assume that $\{x,y\}\subseteq A_2$. Thus $|A_8|\leq 1$.

If $A_9\neq\emptyset$, then $A_8$ is a cutset of order at most one, a contradiction. Hence $A_9=\emptyset$. Since $\bar{F}\neq\emptyset$, we have $A_3\neq\emptyset$. Then $A_2$ is a cut set containing $xy\in E_1$, so $|A_2|\geq 3$. Consequently, $A_2=S$ and $A_8=\emptyset$. It follows that $\bar{F'}=A_7$ is a nonempty semifragment to $S'$ with $A_7\subsetneq F$, contradicting the minimality of $|F|$. Since $uv$ was arbitrary, $E(G[F])\subseteq E_1$.
\end{proof}
By Claim \ref{Ssize}, $|S|=|S'|=3$. Choose $xy\in E(G[S])\cap E_1$. Let $S=\{x,y,z\}.$ By interchanging $F'$ and $\bar{F'}$ if necessary, we may assume that $\{x,y\}\subseteq A_2\cup A_5.$
\begin{claim}
$A_7=\emptyset$.
\end{claim}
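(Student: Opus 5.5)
We argue by contradiction: assume $A_7\neq\emptyset$, and use the same nine-intersection counting as in Claim \ref{Ssize}. First we record the setting. By Claim \ref{Ssize} we have $S\in\mathcal S_1$ and $E(G[F])\subseteq E_1$, so $uv\in E_1$ and $S'\in\mathcal S_1$ is a $3$-cutset containing $u,v$. Since $u,v\in V(F)$, we get $\{u,v\}\subseteq A_4$. Also $\{x,y\}\subseteq A_2\cup A_5$, so $A_8\subseteq\{z\}$. Finally, we will repeatedly use the identity
\[
|A_2\cup A_4\cup A_5|+|A_5\cup A_6\cup A_8|=|S|+|S'|=6 .
\]
The identity is stated with these two unions (rather than $|A_4\cup A_5\cup A_8|+|A_2\cup A_5\cup A_6|$) because it must hold both for the partition used in Claim \ref{Ssize} and for the one used here.

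If $A_7\neq\emptyset$, then $C:=A_4\cup A_5\cup A_8$ separates $A_7$ from $F'$, which is nonempty, so $|C|\ge 2$. We split according to $|C|$.
\begin{itemize}
\item If $|C|=2$, then $C=\{u,v\}$ is a $2$-cutset containing the edge $uv$. Hence $uv\in E_0$, contradicting $E(G[F])\subseteq E_1$.
\item If $|C|=3$, then $C$ is a $3$-cutset containing $uv\in E_1$, so $C\in\mathcal S_1$. Moreover $A_7$ is a semifragment to $C$ with $|A_7|\le|F|-2<|F|$, because $u,v\in F\setminus A_7$. This contradicts the minimal choice of $(S,F)$.
\item Otherwise $|C|\ge 4$. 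Since $|S'|=3$ forces $|A_4|+|A_5|\le 3$, we must have $A_8=\{z\}$ and $|A_4\cup A_5|=3$; in particular $A_6=\emptyset$. Then $|A_2|+|A_5|=2$, so $A_2\cup A_5=\{x,y\}$.
\end{itemize}

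In this last case we first show $A_3=\emptyset$. If $A_3\neq\emptyset$, then $N(A_3)\subseteq A_2\cup A_5\cup A_6=\{x,y\}$. Since $G$ is $2$-connected, $\{x,y\}$ is then a $2$-cutset containing the edge $xy$, giving $xy\in E_0$, a contradiction. Since $\bar F\neq\emptyset$ and $A_6=\emptyset$, it follows that $A_9\neq\emptyset$, with $N(A_9)\subseteq A_5\cup A_6\cup A_8=A_5\cup\{z\}$.

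We then rule out the sizes of $A_5$. If $A_5=\{x,y\}$, then $A_2=\emptyset$ and $|A_4|=1$, contradicting $u,v\in A_4$. If $A_5=\emptyset$, then $z$ is a cut vertex, which is impossible. Hence $|A_5|=1$, say $A_5=\{x\}$. Then $\{x,z\}$ is a $2$-cutset isolating $\bar F=A_9$, and $y$ has no neighbour in $\bar F$.

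Closing this subcase is the main obstacle. The first idea is to show $xz\in E(G)$; then $\{x,z\}\in\mathcal S_0$, and the smaller semifragment (either $A_9$, or the side containing $y$) contradicts the minimality of $|F|$. Failing that, I would exploit triangle-freeness and $\delta(G)\ge 3$ around $y$. The neighbours of $y$ lie in $F\cup\{x,z\}$ and, since $xy\in E(G)$ and $G$ is triangle-free, they avoid $N(x)$. This should either produce an edge of $G[F]$ lying in a $2$-cutset, contradicting $E(G[F])\subseteq E_1$, or a $3$-cutset in $\mathcal S_1$ with a semifragment smaller than $F$, in the style of Claim \ref{F^*}.
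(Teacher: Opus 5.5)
Your reductions are correct up to and including $A_3=\emptyset$, and they follow the paper's route. Your cases $|C|=2,3$ amount to the paper's observation that $A_4\cup A_5\cup A_8\notin\mathcal S_0\cup\mathcal S_1$, and your argument for $A_3=\emptyset$ (via the cutset $\{x,y\}$ containing $xy\in E_1$) is the paper's. After that, however, the proof is not finished. The subcase $A_5=\{x\}$, $A_2=\{y\}$, $A_8=\{z\}$, $\bar F=A_9$ is left open. The ``first idea'' (showing $xz\in E(G)$) and the fallback using triangle-freeness around $y$ are only described as what ``should'' happen. As written, the claim is not proved.

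The missing step takes one line and needs neither $A_9$ nor any case analysis on $A_5$. In your case $|C|\ge 4$ you have already derived $|A_4|+|A_5|=3$ and $|A_2|+|A_5|=2$, hence $|A_4|=|A_2|+1>|A_2|$. Once $A_3=\emptyset$, the semifragment $F'$ of $G$ to $S'$ satisfies $|F'|=|A_1|+|A_2|<|A_1|+|A_4|+|A_7|=|F|$. Moreover $S'\in\mathcal S_1$, since $uv\in E_1$ by Claim \ref{Ssize}. This contradicts the minimal choice of $(S,F)$. What you overlooked is that $F'$ itself competes in the minimization, not only $A_7$ or sets built from $S$. Everything from ``Since $\bar F\neq\emptyset$ and $A_6=\emptyset$'' onward can be replaced by this comparison.
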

\begin{proof}
If not, then $A_7$ is a semifragment of $G$ to $A_4\cup A_5\cup A_8$. Since $A_7\subsetneq F$, the minimality of $|F|$ implies that $A_4\cup A_5\cup A_8\notin \mathcal S_0\cup \mathcal S_1$. Moreover, since $uv\in E_1$ and $\{u,v\}\subseteq A_4$, this forces $|A_4\cup A_5\cup A_8|\geq 4$ and thus $|A_4|>|A_2|$ since $|A_2\cup A_5\cup A_8|=|S|=3$.
Since $|S|,|S'|\leq3$, $|A_2\cup A_5\cup A_6|\leq |S|+|S'|-|A_4\cup A_5\cup A_8|\leq 2$.

If $A_3\neq\emptyset$, then $A_2\cup A_5\cup A_6$ is a cutset of order at most $2$ containing the edge $xy\in E_1$, a contradiction to the definition of $E_1$. Hence $A_3=\emptyset$. It follows that $|F'|=|A_1|+|A_2|+|A_3|<|A_1|+|A_4|+|A_7|=|F|$. This contradicts the choice of $F$.
\end{proof}
\begin{claim}\label{A425}
$|A_4|=|A_2\cup A_5|=2$.
\end{claim}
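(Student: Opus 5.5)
The plan is to use only the facts already established: $|S|=|S'|=3$, $\{u,v\}\subseteq A_4$ with $uv\in E(G)$, $\{x,y\}\subseteq A_2\cup A_5$, and $A_7=\emptyset$. We split according to whether $A_9$ is empty, and count with the identity
\[
|A_2\cup A_4\cup A_5|+|A_5\cup A_6\cup A_8|=|S|+|S'|=6 .
\]
This identity holds because $A_2\cup A_4\cup A_5$ and $A_5\cup A_6\cup A_8$ are disjoint except for $A_5$, and $S=A_2\cup A_5\cup A_8$, $S'=A_4\cup A_5\cup A_6$. Note first that $|A_4|\ge 2$ because $u,v\in A_4$, and $|A_2\cup A_5|\ge 2$ because $x,y\in A_2\cup A_5$. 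Hence $|A_2\cup A_4\cup A_5|\ge 4$, and so $|A_5\cup A_6\cup A_8|\le 2$.

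\emph{Case $A_9\neq\emptyset$.} Every vertex of $A_9$ lies in $\bar F\cap\bar{F'}$. Its neighbours therefore lie in $\bar F\cup S$ and in $\bar{F'}\cup S'$, hence in $A_5\cup A_6\cup A_8\cup A_9$. Since $A_1\cup A_4\neq\emptyset$ lies on the other side, $A_5\cup A_6\cup A_8$ separates $G$. Because $G$ is $2$-connected, $|A_5\cup A_6\cup A_8|\ge 2$. Combined with the upper bound this gives $|A_5\cup A_6\cup A_8|=2$, so $|A_2\cup A_4\cup A_5|=4$. Since both lower bounds $|A_4|\ge 2$ and $|A_2\cup A_5|\ge 2$ are in force, they must both be equalities, which is exactly Claim~\ref{A425}.

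\emph{Case $A_9=\emptyset$.} Since $A_7=\emptyset$ as well, we get $\bar{F'}=A_8$. By the definition of a semifragment $\bar{F'}$ is nonempty, and $A_8\subseteq S\setminus\{x,y\}=\{z\}$, so $A_8=\{z\}$. Then $z$ forms a component of $G-S'$, so $N_G(z)\subseteq S'$. Using $\delta(G)\ge 3$ and $|S'|=3$, we get $N_G(z)=S'\ni u,v$. But $uv\in E(G)$, so $\{z,u,v\}$ is a triangle, contradicting that $G$ is triangle-free. So this case cannot occur, and the claim follows from the first case.

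The only delicate point is checking the separation statement in the first case. The neighbourhood containment above gives that $A_9$ is separated from $A_1\cup A_4\ni u$, so $A_5\cup A_6\cup A_8$ is indeed a vertex cut and $2$-connectivity applies. The rest is bookkeeping with the nine-cell partition.
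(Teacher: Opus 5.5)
Your proof is correct and takes essentially the paper's route. You split on whether $A_9$ is empty; when $A_9\neq\emptyset$ you combine $2$-connectivity with the count $|A_2\cup A_4\cup A_5|+|A_5\cup A_6\cup A_8|=|S|+|S'|=6$, and you rule out $A_9=\emptyset$ because then $\bar{F'}=A_8$ is too small. The only difference is cosmetic: the paper cites the earlier claim that every semifragment to a member of $\mathcal S_0\cup\mathcal S_1$ has at least three vertices, while you note $A_8=\{z\}$ and derive the triangle $zuv$ directly, which is that same triangle-free/minimum-degree argument written inline.
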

\begin{proof}
Suppose either $|A_4|>2$ or $|A_2\cup A_5|>2$. Then $|A_2\cup A_4\cup A_5|\geq 5$. If $A_9\neq \emptyset$, then $A_5\cup A_6\cup A_8$ is a cut set of $G$ and thus $|A_5\cup A_6\cup A_8|\geq 2$. This gives $|S|+|S'|>6$, a contradiction. Thus $A_9=\emptyset$. Now $A_7=A_9=\emptyset$, by Claim \ref{F^*} we have $|A_8|\geq 3$. Hence $|S|=|A_2\cup A_5\cup A_8|\geq 5$, a contradiction.
\end{proof}
By Claim \ref{A425}, we have $A_4=\{u,v\}$ and $A_2\cup A_5=\{x,y\}$. This implies $A_8=\{z\}$. Moreover, since $A_7=\emptyset$, every neighbor of $z$ in $F$ lies in $A_4$. Since $G$ is triangle-free, $z$ is nonadjacent to at least one of $u$ and $v$. Without loss of generality, assume that $zu\notin E(G)$. Then $\{v,x,y\}$ is a $3$-cutset of $G$ with $xy\in E(G[\{v,x,y\}])$. Moreover, $A_1\cup\{u\}$ is a semifragment of $G$ to $\{v,x,y\}$, and $|A_1\cup\{u\}|<|F|$. This contradicts the minimality of $|F|$.
\hfill$\qed$

\end{spacing}

\end{document}